\documentclass{amsart}

\setlength{\textheight}{23cm}
\setlength{\textwidth}{5.8in}

\addtolength{\oddsidemargin}{-.57in}
\addtolength{\evensidemargin}{-.57in}
\addtolength{\topmargin}{-.3in}

\newtheorem{definition}{Definition}[section]
\newtheorem{theorem}[definition]{Theorem}
\newtheorem{lemma}[definition]{Lemma}
\newtheorem{proposition}[definition]{Proposition}
\newtheorem{corollary}[definition]{Corollary}
\newtheorem{example}[definition]{Example}
\newtheorem{remark}[definition]{Remark}

\begin{document}

\title{On Existence of Regular Jacobi Structures}
\author{Mahuya Datta and Sauvik Mukherjee}
\address{Stat-Math Unit\\ Indian Statistical Institute\\ Kolkata 700108, India\\ e-mail: mahuya@isical.ac.in;
Stat-Math Unit\\ Indian Statistical Institute\\ Kolkata 700108, India\\ e-mail: mukherjeesauvik@yahoo.com}

\begin{abstract} We prove $h$-principle for locally conformal symplectic foliations and contact foliations on open manifolds. We then interpret the results in terms of regular Jacobi structures on manifolds.\\
\mbox{}\\
Key words: Contact foliations, locally conformal symplectic foliations, regular Jacobi structures.\\
Mathematics Subject Classification 2010: 53D99,  57R17, 57R30.\end{abstract}
\maketitle

\section{Introduction}
It is a classical result due to Gromov which says that an \textit{open} manifold admitting a non-degenerate 2-form must also support a symplectic form. A foliated version of this result was proved by Bertelson in \cite{bertelson}.
\begin{theorem} Any 2-form on a foliated manifold $(M,\mathcal F)$ which is non-degenerate on the leaves is homotopic through the space of such 2-forms to a leafwise symplectic form, provided the foliation $\mathcal F$ satisfies some `openness' condition.\label{bertelson}\end{theorem}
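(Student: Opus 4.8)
The plan is to recast the assertion as an $h$-principle for the leafwise differential relation $\mathcal R$ whose solutions are the \emph{leafwise symplectic} forms: sections $\omega$ of $\Lambda^2 T^*\mathcal F$ satisfying the first-order equation $d_{\mathcal F}\omega=0$ together with the open, order-zero condition that $\omega$ be non-degenerate on each leaf. A $2$-form that is non-degenerate on the leaves is precisely a section satisfying the $0$-jet part of $\mathcal R$, so the hypothesis furnishes a \emph{formal} solution, and the conclusion is that it can be deformed, staying within leafwise non-degenerate forms, to a genuine solution. Thus the theorem is exactly the $h$-principle for $\mathcal R$, and I would prove it by Gromov's sheaf-theoretic method: show that the sheaf $\Phi$ of local solutions of $\mathcal R$ is \emph{microflexible}, and then promote microflexibility to flexibility using the openness of $\mathcal F$.

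First I would establish microflexibility of $\Phi$, the foliated analogue of Gromov's theorem that the sheaf of symplectic forms is microflexible. Over a distinguished foliation chart the equation $d_{\mathcal F}\omega=0$ is a leafwise closedness condition and non-degeneracy is open; Gromov's microflexibility argument for symplectic forms, applied leaf-by-leaf but \emph{continuously in the transverse parameter}, should then show that a solution defined near a compact set, together with a prescribed deformation of its restriction there, extends to a solution on a slightly smaller neighbourhood for a short time. Arranging that these leafwise extensions assemble into a smooth section of $\Lambda^2 T^*\mathcal F$ over all of $M$ is the first technical point, and it is where the parametrised leafwise Poincar\'e lemma enters.

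Next I would exploit the `openness' hypothesis on $\mathcal F$. Just as an open manifold deformation retracts onto a polyhedron of positive codimension, allowing Gromov's pushing argument, the openness condition should provide a leafwise isotopy, namely the flow of a suitable foliated vector field, that compresses $M$ into an arbitrarily small neighbourhood of a subset $K$ of positive leafwise codimension, uniformly in the transverse direction. Feeding this compressing isotopy into the standard argument upgrades the microflexible sheaf $\Phi$ to a flexible one, so the $h$-principle for $\mathcal R$ holds; specialised to our formal solution it yields the desired homotopy, through leafwise non-degenerate forms, to a leafwise symplectic form, and a relative version over parameter families follows in the same way.

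The hard part, and the reason the openness hypothesis cannot be dropped, is the transverse coherence. In Gromov's single-manifold theorem the entire construction lives on one manifold, whereas here the leafwise PDE must be solved simultaneously along a possibly complicated, non-compact family of leaves, and both the microflexible extensions of the second step and the compressing isotopy of the third step must depend smoothly on the transverse variable and be compatible across leaves. Producing a single globally defined foliated isotopy that plays the role of the open-manifold retraction for \emph{every} leaf at once is exactly what the openness condition on $\mathcal F$ is designed to guarantee, and checking that the glued form is genuinely leafwise closed and non-degenerate everywhere is where the bulk of the work lies.
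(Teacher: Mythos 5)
First, a point of comparison: the paper does not prove Theorem~\ref{bertelson} at all. It is Bertelson's theorem, quoted from \cite{bertelson}, and the paper states explicitly that it follows from a general $h$-principle proved there. So your proposal has to be judged against Bertelson's argument, whose structure is in fact mirrored in this paper's own proofs of Theorems~\ref{lcs} and \ref{contact}: one never works directly with the non-open relation ``leafwise closed and non-degenerate''. Instead one passes to leafwise primitives. The jet-level epimorphism $j^1\alpha \mapsto d_{\mathcal F}\alpha$ (the foliated analogue of Lemma~\ref{lemma_lcs}, with $\theta=0$) converts the problem into an $h$-principle for the relation $\{j^1\alpha : d_{\mathcal F}\alpha \ \text{leafwise non-degenerate}\}$, which is \emph{open} and invariant under foliated isotopies; Bertelson's general theorem for open invariant relations then applies, and composing the resulting homotopy of jet-space sections with the epimorphism yields the desired homotopy through leafwise non-degenerate $2$-forms, ending at the leafwise exact symplectic form $d_{\mathcal F}\alpha$. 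This is also exactly the mechanism of Theorem~\ref{h-principle} in the paper, except that there the openness of $M$ (with the foliation allowed to move) replaces the openness condition on $\mathcal F$.

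Your plan keeps the closedness equation $d_{\mathcal F}\omega=0$ inside the relation and proposes Gromov's sheaf-theoretic route (microflexibility promoted to flexibility). That framework is a legitimate alternative in the unfoliated case, but as written your argument has a genuine gap: the two steps carrying all of the content are asserted, not proved. Microflexibility of the foliated symplectic sheaf, with smooth transverse dependence, is left at ``should then show''; and, more seriously, the promotion of microflexibility to flexibility is precisely the step that requires the openness condition on $\mathcal F$, which you never state and whose role you simply declare to be ``whatever guarantees the compressing foliated isotopy''. That is circular: since Bertelson also produced counterexamples showing the theorem fails without her condition, any correct proof must invoke the condition in an essential, concrete way, and an argument in which the condition is a black box defined by its desired consequence cannot be completed as it stands. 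A full proof must either formulate the condition and construct from it the leafwise compression and the gluing (this is the bulk of \cite{bertelson}), or reduce to her general open-relation theorem as above --- a reduction your formulation of the relation (closed, hence non-open) blocks you from making directly.
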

The openness condition on the foliation is a technical one and we refer to \cite{bertelson} for the definition. The theorem above follows as a direct consequence of a general $h$-principle type result proved in \cite{bertelson}. In the same article the author also produced several examples to show that the theorem breaks down without the open-ness condition on the foliation.

Symplectic foliations $(M,\mathcal F)$ are closely related to regular Poisson structures on $M$. Recall that a Poisson structure $\pi$ is a bivector field satisfying the condition $[\pi,\pi]=0$, where the bracket denotes the Schouten bracket of multivector fields \cite{vaisman}. The bivector field $\pi$ induces a vector bundle morphism $\pi^\#:T^*M\to TM$ by $\pi^\#(\alpha)(\beta)=\pi(\alpha,\beta)$ for all $\alpha,\beta\in T^*_xM$, $x\in M$.

The characteristic distribution $\mathcal D=\text{ Image }\pi^\#$, in general, is a singular distribution which, however, integrates to a foliation. The restrictions of the Poisson structure to the leaves of this foliation define symplectic forms on the leaves in a canonical way. Thus, the characteristic foliation is a (singular) symplectic foliation. In fact, the restriction of the Poisson structure to a leaf of the foliation has the maximum rank and so it canonically defines a symplectic form on the leaf. A Poisson bivector field $\pi$ is said to be \emph{regular} if the rank of $\pi^\#$ is constant. In this case the characteristic foliation is a regular symplectic foliation on $M$. On the other hand, given a regular symplectic foliation $\mathcal F$ on $M$ one can associate a Poisson bivector field $\pi$ having $\mathcal F$ as its characteristic foliation. Theorem \ref{bertelson} therefore can be stated as follows:\\

\emph{If $\pi_0$ is a regular bivector field (on a manifold $M$) for which the distribution $\mathcal D=\text{Im\ }{\pi_0}^{\#}$ integrates to a foliation satisfying some open-ness condition. Then $\pi_0$ can be homotoped through regular bivector fields $\pi_t$ to a Poisson bivector field $\pi_1$ having $\mathcal D$ as its characteristic distribution. Moreover, the homotopy of underlying distributions $\text{Im\ }{\pi_t}^{\#}$ remains constant.} \\

In fact, to keep the underlying foliation constant during homotopy, Bertelson had to impose the additional condition on $\mathcal F$. In a recent article, Fernandes and Frejlich \cite{fernandes} have shown that if we allow the underlying foliation to vary then a regular bivector field $\pi_0$ whose characteristic distribution is integrable can be homotoped to a Poisson bivector field, provided the manifold is \emph{open}.
In general, an arbitrary distribution need not be integrable. Taking into account Haefliger's result \cite{haefliger} Fernandes and Frejlich have proved the following.

\begin{theorem}\cite{fernandes} Every regular bivector field $\pi_0$ on an open manifold can be homotoped to a Poisson bivector provided the underlying distribution of $\pi_0$ is homotopic to an integrable one.\label{hprinciple_fernandes}\end{theorem}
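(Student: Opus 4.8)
The plan is to translate the problem into geometric data on a distribution and then feed it into the two flexibility results already available for open manifolds: Haefliger's $h$-principle for foliations and the foliated Gromov-type $h$-principle underlying Theorem \ref{bertelson}. The starting observation is that a regular bivector field $\pi$ of rank $2r$ is the same thing as a pair $(\mathcal{D},\sigma)$, where $\mathcal{D}=\text{Im}\,\pi^{\#}$ is a rank-$2r$ distribution and $\sigma$ is a fibrewise non-degenerate $2$-form on $\mathcal{D}$, obtained by inverting the non-degenerate restriction $\pi|_{\mathcal{D}}$; conversely any such pair recovers $\pi$. Under this dictionary $\pi$ is Poisson exactly when $\mathcal{D}$ is integrable and $\sigma$ is leafwise closed, i.e. when $(\mathcal{D},\sigma)$ is a symplectic foliation. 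So I would restate the theorem as follows: the pair $(\mathcal{D}_0,\sigma_0)$ attached to $\pi_0$ can be deformed, through pairs of the same type, into a pair $(\mathcal{F},\omega)$ with $\mathcal{F}$ a foliation and $\omega$ leafwise symplectic.

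The argument then proceeds in two steps. First I would make the distribution integrable. Since $\mathcal{D}_0$ is homotopic through distributions to an integrable one and $M$ is open, Haefliger's theorem \cite{haefliger} lets us homotope $\mathcal{D}_0$ through distributions $\mathcal{D}_t$ to a genuine foliation $\mathcal{F}=\mathcal{D}_1$. As homotopic distributions are isomorphic as vector bundles, I would transport $\sigma_0$ along this homotopy by a covering homotopy of bundle isomorphisms, obtaining fibrewise non-degenerate forms $\sigma_t$ on $\mathcal{D}_t$; the pairs $(\mathcal{D}_t,\sigma_t)$ then give a homotopy of regular bivector fields $\pi_t$ ending at a $\pi_1$ whose characteristic distribution is the foliation $\mathcal{F}$. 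Second I would make the leafwise form symplectic: applying the foliated $h$-principle of Theorem \ref{bertelson} to $(M,\mathcal{F})$ and $\sigma_1$ deforms $\sigma_1$, through leafwise non-degenerate forms and keeping $\mathcal{F}$ fixed, to a leafwise symplectic form. Read back in terms of bivectors, this is a homotopy of regular bivector fields with characteristic foliation $\mathcal{F}$ terminating at a Poisson bivector, and concatenation with the first homotopy would finish the proof.

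The main obstacle is the junction of the two steps, and this is precisely where openness of $M$ must be used. Theorem \ref{bertelson} requires Bertelson's technical openness condition on the foliation, and the examples in \cite{bertelson} show that the leafwise $h$-principle genuinely fails without it; hence Step~2 cannot be applied to an arbitrary outcome of Step~1. The way out is that, unlike in Bertelson's fixed-foliation setting, we are allowed to move the foliation: Haefliger's theorem produces not a single foliation but a flexible family in the homotopy class of $\mathcal{D}_0$, and on an open manifold this flexibility --- rooted in the Gromov--Phillips microflexibility that is available because $M$ retracts onto a subcomplex of positive codimension --- lets us steer the integration in Step~1 so that the resulting $\mathcal{F}$ meets the openness hypothesis needed in Step~2. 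Establishing that the two steps can be arranged simultaneously, so that the freedom to vary the foliation dispenses with the extra condition Bertelson was forced to impose, is the heart of the matter.
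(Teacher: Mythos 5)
Your Step~1 (Haefliger plus transporting the fibrewise form along the homotopy of distributions) is fine and matches the standard reduction. The genuine gap is in Step~2 and in the proposed junction. You invoke Theorem~\ref{bertelson} with the foliation $\mathcal{F}$ held \emph{fixed}, but that theorem is only valid under Bertelson's openness condition on the foliation, and the foliation produced by Haefliger's theorem has no reason to satisfy it; Bertelson's own counterexamples show the fixed-foliation statement is genuinely false in general. Your proposed repair --- ``steering'' Haefliger's integration so that the output foliation satisfies the openness condition --- is not an argument: you give no mechanism for it, none is known, and you yourself concede that establishing it is ``the heart of the matter.'' So the proof, as written, reduces the theorem to an unproved (and quite possibly unprovable) claim.

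The actual proof, which is the one this paper runs for its generalization (Theorem~\ref{lcs}; set $\theta=0$ to recover the present statement, cf.\ the remark following its proof), avoids Bertelson's theorem entirely and never needs any condition on the foliation. It has two ingredients of a different nature. First, the Holonomic Approximation Theorem (Theorem~\ref{h-principle}) applied to the open relation $D^{-1}(\mathcal S)\subset (\wedge^1 M)^{(1)}$, where $D(j^1\alpha)=d\alpha$ and $\mathcal S$ is the set of leafwise non-degenerate $2$-covectors, homotopes $\sigma_1$ through leafwise non-degenerate forms to a form that is exact, hence leafwise symplectic, on a neighbourhood $U$ of a \emph{core} $K$ of $M$ --- a conclusion local near the core, which is why no openness hypothesis on $\mathcal F$ is required. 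Second, since $M$ is open there is an isotopy $g_t$ with $g_0=\mathrm{id}_M$ and $g_1(M)\subset U$, and one pulls back \emph{both} the foliation and the form: $\mathcal F'_t=g_t^{-1}\mathcal F$, $\omega'_t=g_t^*\omega_1$. Then $\omega'_1$ is closed on the leaves of $\mathcal F'_1$ because $g_1$ maps $M$ into the region where $\omega_1$ is closed. The freedom to vary the foliation is thus used in this compression step (the final foliation is $g_1^{-1}\mathcal F$, not $\mathcal F$), rather than in any refinement of Haefliger's theorem; this is the idea your proposal is missing.
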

The present article is inspired by a comment in \cite{fernandes}, where the authors mention that there should be analogues of Theorem~\ref{hprinciple_fernandes} for foliated locally conformal symplectic manifolds, foliated contact manifolds or more generally for Jacobi manifolds. The main results of this article are stated as follows.

\begin{theorem} Let $M^{2n+q}$ be an open manifold with a codimension $q$ foliation $\mathcal{F}_{0}$ and a 2-form $\omega_0$ which is non-degenerate on the leaves of $\mathcal{F}_{0}$. Fix a de Rham cohomology class $\xi \in H_{deR}^{1}(M)$. Then there exists a homotopy $(\mathcal{F}_t, \omega_t)$ and a closed 1-form $\theta$ representing $\xi$ such that
\begin{enumerate}\item $\omega_t$ is $\mathcal{F}_t$-leafwise non-degenerate
\item $\omega_1$ satisfies the relation $d\omega_1-\theta\wedge \omega_1=0$.
\end{enumerate} In particular, $\omega_1$ is $\mathcal F_1$-leafwise locally conformal symplectic form with the foliated Lee class $\xi_{\mathcal F_1}$ which is the image of $\xi$ under the canonical projection $H_{deR}^1(M)\to H^1(M,\mathcal F)$, where $H^1(M,\mathcal F_1)$ denotes the foliated de Rham cohomology group of $(M,\mathcal F_1)$.\label{lcs}\end{theorem}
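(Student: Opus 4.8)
The plan is to reformulate the differential condition $d\omega_1 - \theta\wedge\omega_1 = 0$ in terms of the Lichnerowicz (twisted de Rham) differential and then to run a foliated $h$-principle, reducing the twisted problem locally to Bertelson's leafwise symplectic case by a conformal rescaling. First I would fix, once and for all, a closed $1$-form $\theta$ representing the prescribed class $\xi\in H^1_{deR}(M)$, and introduce the twisted differential $d_\theta = d - \theta\wedge(\cdot)$ on $\Omega^*(M)$. Since $\theta$ is closed, $d_\theta^2 = 0$, and the target condition becomes precisely $d_\theta\omega_1 = 0$. The key local observation is that on any contractible open set $U$ where $\theta = df$, the rescaling $\omega \mapsto e^{-f}\omega$ sends leafwise non-degenerate $d_\theta$-closed forms bijectively to leafwise non-degenerate closed (that is, leafwise symplectic) forms, because $d(e^{-f}\omega) = e^{-f}\,d_\theta\omega$. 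Thus the sheaf of leafwise locally conformal symplectic forms with Lee form $\theta$ is locally isomorphic to the sheaf of leafwise symplectic forms studied by Bertelson.

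Next I would package the problem as a partial differential relation $\mathcal R$ over $M$ whose sections are pairs $(\mathcal F, \omega)$ with $\mathcal F$ a codimension $q$ foliation and $\omega$ an $\mathcal F$-leafwise non-degenerate $d_\theta$-closed $2$-form, and establish that $\mathcal R$ satisfies the $h$-principle. Because $M$ is open, I would follow the strategy of Fernandes--Frejlich underlying Theorem~\ref{hprinciple_fernandes}: the openness of $M$ permits deforming $(\mathcal F_0,\omega_0)$ while allowing the underlying foliation to vary (so that no Bertelson-type openness condition on $\mathcal F_0$ is needed), which reduces the verification of the $h$-principle to a local, chart-by-chart problem. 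On each chart the conformal rescaling above converts $\mathcal R$ into exactly the leafwise symplectic relation, for which the desired flexibility (openness and microflexibility of the corresponding sheaf, together with the machinery behind Theorem~\ref{bertelson}) is already available. One then glues the local homotopies over a retract of $M$ onto a subcomplex of positive codimension, which exists precisely because $M$ is open.

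The main obstacle is this globalization step: the conformal factors $e^{-f_\alpha}$ attached to a cover by charts $U_\alpha$ do not patch to a single global function, since on overlaps $f_\alpha - f_\beta$ is only locally constant. Consequently one cannot reduce Theorem~\ref{lcs} to a single application of Bertelson's theorem after a global conformal change; instead one must check that the sheaf-theoretic argument survives verbatim for the $d_\theta$-twisted relation, using that $\mathcal R$ is locally isomorphic to the untwisted relation and that both openness and microflexibility are local properties preserved by the isomorphism. Once the homotopy $(\mathcal F_t,\omega_t)$ is produced with $\omega_t$ leafwise non-degenerate throughout and $d_\theta\omega_1 = 0$, the final statement about the Lee class is immediate: restricting $\theta$ to the leaves of $\mathcal F_1$ yields a leafwise closed $1$-form whose class in $H^1(M,\mathcal F_1)$ is the image of $\xi$ under the canonical projection $H^1_{deR}(M)\to H^1(M,\mathcal F_1)$, and $\omega_1$ is then leafwise locally conformal symplectic with exactly this foliated Lee class $\xi_{\mathcal F_1}$.
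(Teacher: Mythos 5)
There is a genuine gap at the center of your plan: the relation you propose to apply $h$-principle machinery to is not an open differential relation. Both of its defining conditions --- $d_\theta\omega=0$ and integrability of $\mathcal{F}$ --- are closed conditions (PDE constraints, not open pointwise constraints on jets), so neither the Holonomic Approximation Theorem nor Gromov's technique for open relations on open manifolds applies to it directly; this is exactly the difficulty your proposal defers to ``the sheaf-theoretic argument.'' The paper's key move, which your proposal lacks, is to eliminate the closedness condition by working with a \emph{primitive}: Lemma~\ref{lemma_lcs} shows that $\alpha\mapsto d_\theta\alpha$ factors through the $1$-jet bundle, giving a vector bundle epimorphism $D_\theta:(\wedge^1M)^{(1)}\to\wedge^2M$ with $D_\theta\circ j^1\alpha=d_\theta\alpha$. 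One lifts $\omega_0$ to a section $\sigma_0$ of the \emph{open} relation $\mathcal{R}_\theta=D_\theta^{-1}(\mathcal{S})$, where $\mathcal{S}$ is the (open) set of forms non-degenerate on the fixed foliation $\mathcal{F}_0$, and applies Theorem~\ref{h-principle} to homotope $\sigma_0$ inside $\mathcal{R}_\theta$ to a holonomic section $j^1\alpha$ near a core $K$. Then $\omega_1=d_\theta\alpha$ is $d_\theta$-closed (indeed $d_\theta$-exact) on $Op\,(K)$ \emph{by construction}, while $\omega_t=D_\theta\circ\sigma_t$ stays leafwise non-degenerate throughout. No closedness condition ever appears in the relation, and the foliation is held completely fixed during this step.

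Your globalization scheme also would not work as stated. You invoke Bertelson's machinery chart-by-chart and propose to glue because ``openness and microflexibility are local properties,'' but Theorem~\ref{bertelson} concerns a \emph{fixed} foliation satisfying a global openness hypothesis, and gluing local solutions is precisely the content of an $h$-principle --- it cannot be obtained from the local isomorphism (your rescaling $\omega\mapsto e^{-f}\omega$) between the twisted and untwisted relations; microflexibility of the sheaf of leafwise symplectic or l.c.s.\ forms is itself unproven here, and putting the foliation into the unknown makes matters worse, since the flexibility of foliations on open manifolds (Haefliger) is a separate non-local theorem. In the paper (following Fernandes--Frejlich), the foliation varies only in a final, elementary step that requires no gluing at all: since $M$ is open there is an isotopy $g_t$ with $g_0=\mathrm{id}_M$ and $g_1(M)\subset U=Op\,(K)$; setting $\mathcal{F}'_t=g_t^{-1}\mathcal{F}_0$ and $\omega'_t=g_t^*\omega_1$ one gets $d_{g_1^*\theta}\omega'_1=g_1^*(d_\theta\omega_1)=0$ on all of $M$, with $[g_1^*\theta]=[\theta]=\xi$ because $g_1\simeq\mathrm{id}_M$. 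Your local conformal-rescaling observation is correct, and your concluding remark on the Lee class is fine, but the rescaling becomes unnecessary once one passes to primitives, and without that passage the core $h$-principle step in your proposal remains a black box rather than a proof.
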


As an immediate corollary we can deduce that any open manifold $M$ admits a locally conformal symplectic form $\omega$ with a given Lee class $\xi$ (see Section 2) provided that there exists a non-degenerate 2-form on $M$. Further, when $\xi=0$ we get a globally conformal symplectic form and hence a symplectic form on the manifold, thereby recovering Gromov's theorem.

\begin{theorem} Let $M^{(2n+1)+q}$ be an open manifold and $\mathcal{F}_{0}$ a codimension $q$ foliation on $M$. Let $(\theta_{0},\omega_{0})\in \Omega^{1}(M)\times \Omega^{2}(M)$ be such that its restrictions to the leaves of $\mathcal F_0$ are almost contact structures. Then there exists a homotopy $(\mathcal{F}_{t},\theta_{t},\omega_{t})$ such that $(\theta_{t},\omega_{t})$ is a $\mathcal{F}_{t}$-foliated almost contact structure and $\omega_{1}=d\theta_{1}$. In particular, $\theta_1$ is a leafwise contact form on $(M,\mathcal F_1)$. \label{contact}\end{theorem}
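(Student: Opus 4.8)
The plan is to deduce Theorem~\ref{contact} from the locally conformal symplectic result of Theorem~\ref{lcs} by a \emph{foliated contactization}, passing to the product $\hat M = M\times\mathbf{R}$ with coordinate $s$, so that the leafwise contact problem in leaf-dimension $2n+1$ becomes a leafwise lcs problem in leaf-dimension $2(n+1)$. Concretely I would set $\hat{\mathcal F}_0 = \mathcal F_0\times\mathbf{R}$, a codimension-$q$ foliation on the open manifold $\hat M$ with leaves $L\times\mathbf{R}$, and associate to the foliated almost contact pair $(\theta_0,\omega_0)$ the $2$-form $\hat\omega_0 = p^*\omega_0 + ds\wedge p^*\theta_0$, where $p:\hat M\to M$. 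A leafwise computation gives $\hat\omega_0^{\,n+1} = (n+1)\,ds\wedge p^*(\theta_0\wedge\omega_0^{n})$, which is nowhere zero on the $(2n+2)$-dimensional leaves precisely because $\theta_0\wedge\omega_0^n$ is a leafwise volume form; hence $\hat\omega_0$ is $\hat{\mathcal F}_0$-leafwise non-degenerate and $(\hat{\mathcal F}_0,\hat\omega_0)$ satisfies the hypotheses of Theorem~\ref{lcs}.

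The guiding computation is that a \emph{split} leafwise lcs structure on $\hat M$ corresponds exactly to a leafwise contact structure on $M$. If $\theta,\omega$ are pulled back from $M$ and $\hat\omega = p^*\omega + ds\wedge p^*\theta$, then the lcs relation $d\hat\omega = ds\wedge\hat\omega$, restricted to the leaves and separated into its $ds$-free and $ds$-components, becomes equivalent to $d_{\mathcal F}\omega=0$ (automatic) together with $\omega = -d_{\mathcal F}\theta$. Thus a leafwise lcs solution on $\hat M$ that is of split type and has Lee form $ds$ restricts on each slice $M\times\{s_0\}$ to a genuine leafwise contact form with $\omega_1 = d_{\mathcal F_1}\theta_1$ after a harmless sign normalization. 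I would therefore run Theorem~\ref{lcs} on $(\hat{\mathcal F}_0,\hat\omega_0)$, trying to keep the Lee form equal to $ds$, and then descend by restriction to a slice.

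The substantive difficulty — and the step I expect to be the main obstacle — is that Theorem~\ref{lcs} as stated produces \emph{some} homotopy $(\hat{\mathcal F}_t,\hat\omega_t)$ and \emph{some} representative of the Lee class, with no guarantee that the foliation stays a product, that $\hat\omega_t$ stays of split type, or that the output Lee form is literally $ds$; without these, restriction to a slice does not return a contact structure on $M$. The real work is thus to carry out the construction underlying Theorem~\ref{lcs} \emph{$\mathbf{R}$-equivariantly}, starting from the $s$-invariant data $(\hat{\mathcal F}_0,\hat\omega_0)$: one must verify that both the leafwise convex-integration/holonomic-approximation step and the Haefliger--Gromov deformation of the foliation on the open manifold $\hat M$ can be performed translation-invariantly in $s$, so that $\hat{\mathcal F}_t = \mathcal F_t\times\mathbf{R}$ and $\hat\omega_t = p^*\omega_t + ds\wedge p^*\theta_t$ for every $t$ and the Lee form is preserved. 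Granting this equivariant refinement, the endpoint is automatically split and descends to the required $(\mathcal F_1,\theta_1,\omega_1)$.

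As an alternative that sidesteps equivariance, I would instead adapt the engine of Theorem~\ref{lcs} directly to the leafwise contact relation, in the spirit of \cite{bertelson,fernandes,haefliger}: the given $(\mathcal F_0,\theta_0,\omega_0)$ is a formal (algebraic) solution, the leafwise contact condition $\theta\wedge(d_{\mathcal F}\theta)^n\neq 0$ is an open, foliated-diffeomorphism-invariant and, crucially, \emph{ample} differential relation (the ampleness underlying Gromov's $h$-principle for contact structures on open manifolds), and the same combination of flexibility of foliations on the open manifold $M$ with leafwise convex integration over a codimension-$\geq 1$ core yields the desired homotopy. In this route the main obstacle is the coupling $\omega = d_{\mathcal F}\theta$: unlike the lcs case, where one deforms a $2$-form under a linear closedness-type constraint, here $\omega$ is slaved to $\theta$, so one must ensure the convex integration producing $\theta_1$ can be run while simultaneously bending $\mathcal F_0$ to $\mathcal F_1$ in such a way that leafwise non-degeneracy $\theta\wedge\omega^n\neq 0$ is preserved at every stage.
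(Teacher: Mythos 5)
Both of your routes stop at exactly the point where the real work lies, and the first one cannot be repaired as stated. In the contactization route, Theorem~\ref{lcs} controls only the Lee \emph{class}, never the Lee \emph{form}: on $\hat M=M\times\mathbf{R}$ the form $ds$ is exact, so the only class you can prescribe is $\xi=0$, and the theorem then returns some exact Lee form $\theta=df$ with no relation to $ds$. The lcs relation $d\hat\omega=ds\wedge\hat\omega$ requires the Lee form to be literally $ds$, so the output of Theorem~\ref{lcs} does not restrict to a contact structure on any slice. You correctly flag that one would need an $\mathbf{R}$-equivariant refinement keeping the foliation a product, the form split, and the Lee form equal to $ds$, but you do not prove it, and it is not something one can cite: the Holonomic Approximation Theorem proceeds by a small diffeotopy wiggling a core inside the domain, and there is no equivariant version along a non-compact group direction available off the shelf. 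So this route is a genuine gap, not a deferred technicality.

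Your second route is in the spirit of the paper's actual proof, but the ``main obstacle'' you name there --- that $\omega$ is slaved to $\theta$ by $\omega=d_{\mathcal F}\theta$, apparently forcing a delicate coupling between convex integration and the bending of $\mathcal F_0$ --- is precisely the point the paper dissolves with a simple device you are missing, and once it is in place neither convex integration nor ampleness is needed. The paper (Lemma~\ref{lemma_contact}) builds a vector bundle epimorphism $\bar D:(\wedge^1M)^{(1)}\to\wedge^1M\oplus\wedge^2M$ with $\bar D(j^1\alpha)=(\alpha,d\alpha)$. The given pair $(\theta_0,\omega_0)$ is then lifted to a \emph{non-holonomic} section $\sigma_0$ of the $1$-jet bundle; in the formal world $\theta$ and $\omega$ are independent, and holonomy of a section is exactly the coupling $\omega=d\theta$. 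The leafwise almost contact condition defines an open set $\mathcal C$, and $\mathcal R=\bar D^{-1}(\mathcal C)$ is an open relation, so the Holonomic Approximation Theorem (in the form of Theorem~\ref{h-principle}) homotopes $\sigma_0$ through sections of $\mathcal R$, \emph{with the foliation $\mathcal F_0$ held fixed}, to a holonomic section $j^1\theta_1$ on a neighbourhood $U$ of a core; there $\omega_1=d\theta_1$. Only afterwards is the foliation deformed, and not by any integration scheme: one compresses the open manifold $M$ into $U$ by an isotopy $g_t$ and pulls everything back, $\mathcal F'_t=g_t^{-1}\mathcal F_0$, $\theta'_t=g_t^*\theta_1$, $\omega'_t=g_t^*\omega_1$, which preserves the leafwise condition automatically because $g_t$ are diffeomorphisms. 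Concatenating the two homotopies finishes the proof. So the missing idea in your proposal is the jet-bundle reformulation via $\bar D$, which decouples the pair at the formal level and reduces everything to holonomic approximation plus the standard compression of an open manifold onto a core.
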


As a corollary we can recover Gromov's result on the existence of a contact form on an open manifold by taking $\mathcal F_0$ to be the trivial foliation \cite{gromov}.

It may be recalled that locally conformal symplectic foliations and contact foliations can be viewed as the characteristic foliations associated with regular Jacobi structures (see Section 2). Thus the last two results throw some light on the existence of regular Jacobi structures on open manifolds.

We organise the paper as follows. In Section 2 we recall basic theory of Jacobi manifolds. We briefly discuss the Holonomic Approximation Theorem in Section 3 as it plays an important role in the proof. The proofs of Theorems \ref{lcs}, \ref{contact} are given in Sections 4 and 5 respectively. In section 6 we interpret Theorems ~\ref{lcs} and ~\ref{contact} in terms of regular Jacobi structure on a manifold.

\section{Preliminaries of Jacobi manifolds}

A Jacobi structure on a smooth manifold $M$ is given by a pairing $(\Lambda,E)$, where $\Lambda$ is a bivector field, that is, a section of $\wedge^{2}TM$ and $E$ is a vector field on $M$, satisfying
\begin{equation}[\Lambda,\Lambda]  =  2E\wedge \Lambda,\ \ \ \ \ \ [E,\Lambda] =  0,\label{jacobi_def}\end{equation}
where [\ ,\ ] is the Schouten bracket. If $E=0$ then $\Lambda$ is a Poisson structure on $M$.

\begin{example}
{\em Let $M$ be a $2n$-dimensional manifold with a symplectic form $\omega$ (i.e. a closed, non-degenerate 2-form). The non-degeneracy condition implies that $b:TM\to T^*M$, given by $b(X)=i_x\omega$ is a vector bundle isomorphism, where $i_X$ denotes the interior multiplication by $X\in TM$. Then $M$ has a Poisson structure defined by
\[\pi(\alpha,\beta)=\omega(b^{-1}(\alpha),b^{-1}(\beta)), \ \text{ for all }\alpha,\beta\in T^*_xM, x\in M.\]}\label{ex_symplectic}\end{example}

\begin{example} {\em A locally conformal symplectic manifold (in short, an l.c.s manifold) is an even dimensional manifold $M$ with a non-degenerate 2-form $\omega$ and a closed 1-form $\theta$ such that
\[ d\omega= \theta \wedge \omega.\]
This equation can be expressed as $d_\theta(\omega)=0$, where $d_\theta=d-\theta\wedge\ $. It can be seen easily that  $d_\theta^2=0$; hence $d_\theta$ is a coboundary operator. The 1-form $\theta$ is called the Lee form of the l.c.s structure $\omega$ and the de Rham cohomology class of $\theta$ is called the Lee class of  $\omega$.
For an l.c.s manifold $(M,\omega,\theta)$ the Jacobi pairing is given by \[\Lambda(\alpha,\beta)=\omega(b^{-1}(\alpha),b^{-1}(\beta))\ \ \text{ and }\ \ \ E=b^{-1}(\theta),\]
where $b:TM\to T^*M$ is defined as in Example~\ref{ex_symplectic}.}\end{example}

\begin{example} {\em Contact manifolds are the odd dimensional counterpart of symplectic manifolds. A 1-form $\eta$ on a $(2n+1)$-dimensional manifold is said to be a contact form on $M$ if $\eta\wedge (d\eta)^n$ is nowhere vanishing.
For contact manifold $(M,\eta)$ the Jacobi pairing is given by \[\Lambda(\alpha,\beta)=d\eta(b^{-1}(\alpha),b^{-1}(\beta)),\ \ \ \mbox{and }\ \ \ \ E=b^{-1}(\eta),\]
where $b:TM\to T^*M$ is the isomorphism given by $b(X)=i_X d\eta+\eta(X)\eta$ for all $X\in TM$.}\end{example}

Let $(M,\Lambda,E)$ be a Jacobi manifold. The bivector field $\Lambda$ defines a bundle homomorphism ${\Lambda}^\#:T^*M\to TM$ by
\[{\Lambda}^\#(\alpha)(\beta)=\Lambda(\alpha,\beta),\]
where $\alpha,\beta\in T^*_xM$, $x\in M$. The Jacobi pair $(\Lambda, E)$ defines a distribution $\mathcal D$ as follows:
\[{\mathcal D}(x)={\Lambda}^\#(T^*_xM)+\langle E_x\rangle, x\in M,\]
where $\langle E_x\rangle$ denotes the subspace of $T_xM$ spanned by the vector $E_x$. In general, $\mathcal D$ is a singular distribution; however, it integrates to a (singular) foliation $\mathcal{F}$ on $M$. This foliation is referred as the characteristic foliation of the Jacobi manifold. If $\mathcal D=TM$ then the manifold is locally conformally symplectic or contact according as the dimension of $M$ is even or odd. More generally, the leaves of the characteristic foliation are either locally conformally symplectic or contact with the induced Jacobi structures on them. In particular, when $E=0$, $\Lambda$ is a Poisson bivector field and the induced Poisson structure on the leaves are non-degenerate. Thus the leaves are symplectic manifolds.

A Jacobi manifold will be called \textit{regular} if its characteristic foliation is regular. Thus a regular Jacobi manifold has a regular l.c.s foliation or a regular contact foliation depending on the dimension of the foliation. Recall that a foliated $k$-form on a foliated manifold $(M,\mathcal F)$ is a section of the exterior bundle $\wedge^k(T^*\mathcal F)$. The exterior differential operator $d$ on the de Rham complex defines in a canonical way a coboundary operator $\bar{d}$ on the foliated de Rham complex $\Gamma (\wedge^kT^*\mathcal F), k\geq 0$. The resulting cohomology is called the foliated de Rham cohomology of the pair $(M,\mathcal F)$. By a foliated l.c.s. structure on a regular foliation $\mathcal F$, we mean a pair $(\omega,\theta)$, where $\theta$ is a foliated closed 1-form and $\omega$ is a non-degenerate foliated 2-form such that $\bar{d}_\theta\omega=0$, where $\bar{d}_\theta=\bar{d}+\theta\wedge \omega$. The cohomology class of $\theta$ in the foliated de Rham cohomology group $H^1(M,\mathcal F)$ is called the Lee class of the foliated l.c.s. structure $\omega$. On the other hand given a regular foliation with leafwise l.c.s or contact form there is a Jacobi structure on $M$ whose characteristic foliation is the given one.

For general theory of symplectic and contact manifolds and Poisson, Jacobi manifolds we refer to \cite{blair}, \cite{lichnerowicz} and \cite{vaisman}.

\section{Holonomic Approximation Theorem}
In this section we review the Holonomic Approximation Theorem from the theory of $h$-principle as it is going to play an important role in the proofs.
Let $p:E\to M$ be a smooth fibration and let $E^{(r)}\to M$ denote the $r$-jet bundle of local sections of $E$.
A section $\sigma$ of $E^{(r)}$ is said to be \textit{holonomic} if it is the $r$-jet of some section $f$ of $E$.

Let $A$ be a polyhedron (possibly non-compact) in $M$ of positive codimension. Let $\sigma$ be any section of the $r$-jet bundle $E^{(r)}$ over $Op\,A$. Here $Op\,A$ denotes an unspecified open neighbourhood of $A$ which may change in course of an argument. The Holonomic Approximation Theorem (see \cite{eliashberg}) says that given any positive functions $\varepsilon$ and $\delta$ on $M$ there exist a (small) diffeotopy $\delta_t$ and a holonomic section $\sigma':Op\,\delta_1(A)\to E^{(r)}$ such that
\begin{enumerate}\item $dist(x,\delta_t(x))<\delta(x)$ for all $x\in M$ and $t\in [0,1]$ and
\item $dist(\sigma(x),\sigma'(x))<\varepsilon(x)$ for all $x\in Op\,(\delta_1(A))$.
\end{enumerate}
This means that under the given conditions any $\sigma$ can be approximated (in the fine $C^0$ topology) by a holonomic section $j^rf$ in a neighbourhood of $\delta_1(A)$, where $\delta_t$ is an arbitrary small diffeotopy of $M$. By a small diffeotopy we mean that $\delta_t(A)$ remains within the domain of $\sigma$ for all $t$

We can summarise the main argument used in the proofs of Theorems~\ref{lcs} and \ref{contact} in the following result.

\begin{theorem} Let $M$ be an open manifold and $p:E\to M$ a smooth fibration. Let $\mathcal R$ be an open subset of the jet space $E^{(r)}$. Then given any section $\sigma$ of $\mathcal R$ there exist a core $K$ of $M$ and a holonomic section $\sigma':Op\,K\to E$ such that the linear homotopy between $\sigma$ and $\sigma'$ lies completely within $\Gamma(\mathcal R)$ over $Op\,K$. \label{h-principle}\end{theorem}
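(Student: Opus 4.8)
The plan is to derive the statement directly from the Holonomic Approximation Theorem quoted above, exploiting two separate openness hypotheses: the openness of the manifold $M$, which supplies a subcomplex thin enough for the approximation theorem to apply, and the openness of the relation $\mathcal R$, which upgrades a mere $C^0$-approximation into a homotopy that never leaves $\Gamma(\mathcal R)$.

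First I would use the fact that an open manifold $M$ carries a \emph{core} $K$, that is, a (possibly non-compact) polyhedron of positive codimension which is a deformation retract of $M$ (its spine); for the present argument the only feature of $K$ that matters is its positive codimension, since this is exactly what HAT requires of the subset $A$. Because $\sigma$ is a section of the \emph{open} set $\mathcal R\subset E^{(r)}$, the image $\sigma(M)$ sits inside $\mathcal R$ with room to spare. Concretely I would fix an auxiliary metric on $E^{(r)}$ and choose a positive continuous function $\varepsilon$ on $M$ so small that for every $x$ the $\varepsilon(x)$-ball about $\sigma(x)$ lies in $\mathcal R$, and I would take $\delta$ small enough that $\delta_1(K)$ is forced to lie inside the neighbourhood of $K$ on which this estimate holds.

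Next I would apply HAT to $\sigma|_{Op\,K}$ with these $\varepsilon,\delta$, obtaining a small diffeotopy $\delta_t$ and a holonomic section $\sigma':Op\,\delta_1(K)\to E^{(r)}$ with $\mathrm{dist}(\sigma(x),\sigma'(x))<\varepsilon(x)$ there. Since $\delta_1$ is isotopic to the identity, $\delta_1(K)$ is again a core of $M$, and I would simply relabel it $K$. Over $Op\,K$ both $\sigma$ and the holonomic $\sigma'$ are then defined and $\varepsilon$-close, so the straight-line homotopy $\sigma_s=(1-s)\sigma+s\sigma'$, formed fibrewise in local trivialisations (or via a fixed affine connection on $E^{(r)}$), stays inside the $\varepsilon(x)$-ball about $\sigma(x)$ for every $s$ and hence inside $\mathcal R$. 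This is precisely the assertion that the linear homotopy between $\sigma$ and $\sigma'$ lies in $\Gamma(\mathcal R)$ over $Op\,K$.

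The genuinely delicate points, and where I would concentrate, are twofold. The first is conceptual: the essential use of the openness of $M$ is the passage from ``approximate over all of $M$'', which is false in general, to ``approximate over $Op\,K$'', which HAT delivers; one must check that the diffeotopy produced by HAT displaces the core only slightly, so that the relabelled $\delta_1(K)$ still lies where $\varepsilon$ controls $\sigma$. The second is the book-keeping that makes the straight-line homotopy simultaneously well defined and trapped in $\mathcal R$: since the fibres of $E^{(r)}\to M$ are not vector spaces, the interpolation must be organised through a connection or through consistent local trivialisations, and $\varepsilon$ must be chosen adapted to the shape of $\mathcal R$ rather than taken constant. Everything beyond this is the routine observation that a $C^0$-small perturbation of a section into an open relation can be joined to it within that relation.
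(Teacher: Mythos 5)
Your proposal is correct and follows essentially the same route as the paper: use openness of $\mathcal R$ to extract a positive function $\varepsilon$ so that $\varepsilon$-close sections stay in $\mathcal R$, apply the Holonomic Approximation Theorem over a core (noting $\delta_1(A)$ is again a core), and observe that the linear homotopy from $\sigma$ to the holonomic section stays $\varepsilon$-close and hence in $\mathcal R$. The only difference is cosmetic: the paper additionally inserts a cut-off function $\chi$ so that the homotopy is globally defined on $M$ and agrees with $\sigma$ outside a tubular neighbourhood of the core, whereas you work directly over $Op\,K$, which is all the statement requires.
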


\begin{proof} Since $\mathcal R$ is an open subset of $E^{(r)}$, the space of sections of $\mathcal R$ is an open subset of $\Gamma(E^{(r)})$ in the fine $C^0$-topology. Therefore, given a section $\sigma$ of $\mathcal R$, there exists a positive function $\varepsilon$ satisfying the following condition:
\[\sigma'\in\Gamma(E^{(r)}) \text{ and } \text{dist}\,(\sigma(x),\sigma'(x))<\varepsilon(x)\ \ \ \Rightarrow \sigma' \text{ is a section of } \mathcal R\]
Consider a core $A$ of $M$ and a $\delta$-tubular neighbourhood of $A$ for some positive $\delta$. By the Holonomic Approximation Theorem there exist a diffeotopy $\delta_t$ and a holonomic section $\sigma'$ satisfying $(1)$ and $(2)$. Observe that $K=\delta_1(A)$ is also a core.
Now, suppose that $\sigma$ and $\sigma'$ are $\varepsilon$-close on the $\rho$-neighbourhood $U_\rho$ of $\delta_1(A)$ in the $\delta$-tubular neighbourhood, for some positive number $\rho>0$. Now take a smooth map $\chi :M\rightarrow [0,1]$ satisfying the following conditions:\\
\[\chi  \equiv  1, \text{ on }  U_{\rho/2}\ \ \ \ \text{and}\ \ \ \text{supp\,}\chi \subset U_{\rho}\]
Define a homotopy $\sigma_t$as follows:
\[\sigma_t = \sigma +t\chi (\sigma'-\sigma), \ \ t\in [0,1].\]
Then
\begin{enumerate}
\item $\sigma_0=\sigma$ and each $\sigma_t$ is globally defined;
\item $\sigma_t=\sigma$ outside $U_\rho$ for each $t$;
\item $\sigma_1$ is holonomic on $U_{\rho/2}$. \end{enumerate}
Moreover, since the above homotopy between $\sigma$ and $\sigma'$ is linear, $\sigma_t$ lies in the $\varepsilon$-neighbourhood of $\sigma$ for each $t$. Hence the homotopy $\sigma_t$ lies completely within $\mathcal R$ by the choice of $\varepsilon$. This completes the proof of the proposition.\end{proof}
\begin{remark}{\em In the language of $h$-principle, $\mathcal R$ is called an open differential relation of order $r$. If $\sigma=j^r_f$ is a holonomic section of $\mathcal R$ then $f$ is called a solution of $\mathcal R$. The above theorem is a an instance of local $h$-principle near a core $K$ of $M$. Note that the core $K$ can not be fixed a priori in the statement of the proposition. The theorem is, in fact, true in the general set up where $E$ is a smooth fibration \cite{gromov_pdr}. In this case, however, the linearity condition on the homotopy $\sigma_t$ has to be dropped for obvious reason. }\end{remark}

\section{The foliated l.c.s case}

In this section we prove that an open manifold together with a regular foliation and a leafwise non-degenerate 2-form can be homotoped through such pairs to a regular foliation with a leafwise locally conformal symplectic form having prescribed Lee class.

\begin{lemma} Let $M^{n}$ be a given manifold equipped with a 1-form $\theta$. Then there exists an epimorphism $D_{\theta}:E^{(1)}=(\wedge^{1}M)^{(1)}\rightarrow \wedge^{2}M$ satisfying $D_{\theta}\circ j^{1}\alpha=d_{\theta}\alpha$ so that the following diagram is commutative:
\[\begin{array}{rcl}
  E^{(1)} & \stackrel{D_\theta}{\longrightarrow} & \wedge^{2}(M)\\
  \downarrow &  & \downarrow \\
  M & \stackrel{\text{id}_{M}}{\longrightarrow} & M
 \end{array}\]
In particular, given any 2-form $\omega$ there exists a section $F_\omega:E^{(1)}\to M$ such that $D_\theta\circ F_\omega=\omega$.\label{lemma_lcs}
\end{lemma}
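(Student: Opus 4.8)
The plan is to recognize that $d_\theta = d - \theta\wedge(\cdot)$ is a linear differential operator of order one carrying 1-forms to 2-forms, and to invoke the universal factorization of such operators through the first jet bundle. The essential observation is that for a 1-form $\alpha$ the value $(d_\theta\alpha)_x$ at a point depends only on the 1-jet $j^1\alpha(x)$: the term $d\alpha$ is genuinely first order, while $\theta\wedge\alpha$ is zeroth order, depending only on $\alpha(x)$ together with the fixed datum $\theta_x$. Hence the assignment $j^1\alpha(x)\mapsto (d_\theta\alpha)_x$ is well defined on jets and, being a composite of $\mathbb{R}$-linear operations, defines a vector bundle homomorphism $D_\theta:E^{(1)}\to\wedge^2 M$ over $\mathrm{id}_M$ with $D_\theta\circ j^1\alpha = d_\theta\alpha$ by construction; the commutativity of the stated square is then automatic.

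To make the map transparent and to prepare the surjectivity argument, I would record $D_\theta$ in a local chart. Writing $\alpha=\sum_i\alpha_i\,dx^i$, its 1-jet is encoded by the numbers $\alpha_i$ and $p_{ij}:=\partial_j\alpha_i$, and a direct computation gives
\[
D_\theta\big(\alpha_i,p_{ij}\big)=\sum_{i<j}(p_{ji}-p_{ij})\,dx^i\wedge dx^j \;-\;\theta\wedge\Big(\sum_i\alpha_i\,dx^i\Big).
\]
One checks that this expression is independent of the chosen coordinates, reconfirming that $D_\theta$ is globally well defined. I would also note the canonical exact sequence
\[
0 \longrightarrow T^*M\otimes \wedge^1 M \longrightarrow E^{(1)} \longrightarrow \wedge^1 M \longrightarrow 0,
\]
so that the top-order part of $D_\theta$ is carried by the subbundle $T^*M\otimes\wedge^1 M$.

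The epimorphism claim is where the only real content sits, and I would settle it through the principal symbol of $d$. On the subbundle $T^*M\otimes\wedge^1 M$ the homomorphism $D_\theta$ acts through the exterior-derivative term alone, sending $p_{ij}\mapsto\sum_{i<j}(p_{ji}-p_{ij})\,dx^i\wedge dx^j$; this is precisely the antisymmetrization map $T^*_xM\otimes T^*_xM\to\wedge^2 T^*_xM$, which is manifestly surjective at every point $x$. Since $D_\theta$ is already onto when restricted to this subbundle, it is fiberwise surjective, hence an epimorphism of vector bundles.

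Finally, for the ``in particular'' statement I would use that a surjective vector bundle homomorphism over a fixed base admits a right inverse as a bundle map: choosing a complement to $\ker D_\theta$ (for instance via any Riemannian metric on $E^{(1)}$) furnishes a linear splitting $s:\wedge^2 M\to E^{(1)}$ with $D_\theta\circ s=\mathrm{id}$, and then for any 2-form $\omega$ the section $F_\omega:=s\circ\omega$ satisfies $D_\theta\circ F_\omega=\omega$. I do not anticipate a serious obstacle anywhere; the genuinely substantive step is the symbol computation establishing surjectivity, and even that reduces to the elementary fact that antisymmetrization of a covariant $2$-tensor surjects onto $2$-forms.
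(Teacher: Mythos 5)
Your proposal is correct and follows essentially the same route as the paper: both define $D_\theta$ on jets by $j^1\alpha(x)\mapsto (d_\theta\alpha)_x$, justify well-definedness by the first-order coordinate computation, and establish surjectivity by hitting a prescribed 2-form with a jet whose zeroth-order part vanishes (the paper's explicit solution $a_i=0$, $a_{ij}=-a_{ji}=b_{ij}/2$ of its linear system is precisely your antisymmetrization-of-the-symbol argument). The only cosmetic differences are that you package surjectivity via the jet exact sequence and principal symbol and make the lifting explicit through a metric splitting, whereas the paper solves the system directly and quotes the standard lifting property of bundle epimorphisms.
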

\begin{proof} Let $\theta$ be as in the hypothesis. Define $D_\theta(j^1\alpha(x_0))=d_\theta\alpha(x_0)$ for any local 1-form $\alpha$ on $M$. To prove that the right hand side is independent of the choice of a representative $\alpha$,
choose a local coordinate system $(x^1,...,x^n)$ around $x_{0}\in M$. We may then express $\alpha$ and $\theta$ in the following forms: \[\alpha=\Sigma_{i=1}^n \alpha_idx^i, \ \ \ \theta=\Sigma_{i=1}^n \theta_idx^i\]
where $\alpha_i$ and $\theta_i$ are smooth (local) functions defined in a neighbourhood of $x_0$.
The 1-jet $j^1\alpha(x_0)$ is completely determined by the ordered tuple $(a_i,a_{ij})\in\mathbb{R}^{(n+n^{2})}$, where
\[a_i=\alpha_i(x_0), a_{ij}=\frac{\partial \alpha_{i}}{\partial x^{j}}(x_{0}), \ \ \ i,j=1,2,\dots,n.\]
Now,
\[d_\theta\alpha(x_0)= d\alpha(x_0)-\theta(x_0)\wedge\alpha(x_0)= \Sigma_{i<j}[(a_{ji}-a_{ij})+(\theta_i(x_0)a_j-a_i\theta_j(x_0))]dx^i\wedge dx^j\]
This shows that $d_\theta\alpha(x_0)$ depends only on the 1-jet of $\alpha$ at $x_0$ and the value of $\theta(x_0)$. Since $\theta$ is fixed, $D_\theta$ is well-defined. Hence, $D_{\theta}\circ j^1\alpha = d_\theta(\alpha)_{x_0}$ for any 1-form $\alpha$.

It is easy to check that $D_\theta$ is a vector bundle morphism. In fact, $D_\theta$ is a vector bundle epimorphism. Indeed, given $b_{ij}, 1\leq i<j\leq n$ the following system of linear equations
\[(a_{ij}-a_{ji})+(a_{i}\theta_{j}(x_{0})-a_{j}\theta_{i}(x_{0}))=b_{ij}\]
is clearly solvable by $a_i=0$, $a_{ij}=-a_{ji}=\frac{b_{ij}}{2}$.

Since $D_\theta$ is an epimorphism, every section $\omega :M \rightarrow \wedge^{2}M$ can be lifted up to a section $F_{\omega}:M \rightarrow (T^*M)^{(1)}$ such that $D_{\theta} F_{\omega}=\omega$. Moreover, any two such lifts of a given section $\omega:M\to\wedge^2 T^*M$ are linearly homotopic.\end{proof}

\begin{proposition}Let $M$ be an open manifold and $\mathcal F_0$ be a regular foliation on $M$. Let $\theta$ be a closed 1-form on $M$.
Then any $\mathcal F_0$-leafwise non-degenerate 2-form $\omega_0$ on $M$ can be homotoped through such forms to a 2-form $\omega_1$ which is $d_\theta$-exact on a neighbourhood $U$ of some core $K$ of $M$.\label{approx_lcs}
\end{proposition}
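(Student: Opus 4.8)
The plan is to recast the statement as an instance of the local $h$-principle near a core and then invoke Theorem~\ref{h-principle}. Take the fibration $E=\wedge^1 T^*M\to M$, so that sections of $E$ are $1$-forms and holonomic sections of $E^{(1)}$ are exactly the $1$-jets $j^1\alpha$, and let $D_\theta:E^{(1)}\to\wedge^2 M$ be the epimorphism of Lemma~\ref{lemma_lcs}, which satisfies $D_\theta\circ j^1\alpha=d_\theta\alpha$. First I would introduce the differential relation
\[\mathcal R=\{\,\sigma\in E^{(1)}\colon D_\theta(\sigma)|_{T\mathcal F_0}\ \text{is non-degenerate}\,\}.\]
Since $D_\theta$ is a continuous bundle morphism and leafwise non-degeneracy is an open pointwise condition (the non-vanishing of the top exterior power of the restriction of a $2$-form to $T\mathcal F_0$), $\mathcal R$ is an open subset of $E^{(1)}$, i.e. an open differential relation of order $1$.

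Next I would manufacture a section of $\mathcal R$ out of the given data. By the second part of Lemma~\ref{lemma_lcs} the epimorphism $D_\theta$ admits a section $F_{\omega_0}:M\to E^{(1)}$ with $D_\theta\circ F_{\omega_0}=\omega_0$. Because $\omega_0$ is $\mathcal F_0$-leafwise non-degenerate, $D_\theta(F_{\omega_0}(x))=\omega_0(x)$ is non-degenerate on $T_x\mathcal F_0$ for every $x$, so $\sigma_0:=F_{\omega_0}$ is a genuine section of $\mathcal R$. Now I would apply Theorem~\ref{h-principle} to $\sigma_0$: as $M$ is open, there is a core $K$ and a holonomic section $\sigma'=j^1\alpha$ over $Op\,K$ such that the globally defined cut-off homotopy $\sigma_t$ built in the proof of that theorem joins $\sigma_0$ to a section $\sigma_1$ which is holonomic on a neighbourhood $U$ of $K$, agrees with $\sigma_0$ outside a slightly larger neighbourhood, and stays inside $\mathcal R$ for all $t$.

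Finally I would push this homotopy down by setting $\omega_t=D_\theta\circ\sigma_t$. Then $\omega_0=D_\theta\circ\sigma_0$ is the original form, and for each $t$ the containment $\sigma_t\in\mathcal R$ translates, by the very definition of $\mathcal R$, into $\omega_t$ being $\mathcal F_0$-leafwise non-degenerate, so the entire path consists of admissible forms. On $U$ we have $\sigma_1=j^1\alpha$, whence $\omega_1=D_\theta\circ j^1\alpha=d_\theta\alpha$, which exhibits $\omega_1$ as $d_\theta$-exact on $U$; this is exactly the desired conclusion (and when $\theta$ is closed it moreover gives $d_\theta\omega_1=d_\theta^2\alpha=0$ on $U$, the leafwise l.c.s. condition needed afterwards). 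I expect the only delicate point to be the passage from the abstract formulation of Theorem~\ref{h-principle} to a \emph{globally defined} homotopy $\sigma_t$ that remains admissible everywhere: this requires the cut-off construction from its proof (the function $\chi$ together with the linear interpolation), and the fact that the linear interpolation stays in the $\varepsilon$-neighbourhood—and hence inside $\mathcal R$—rests on the openness of $\mathcal R$ and on the Holonomic Approximation Theorem, where all the real analytic work is concentrated. By contrast, checking that $\mathcal R$ is open and that $F_{\omega_0}$ lands in $\mathcal R$ is routine.
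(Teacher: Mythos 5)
Your proposal is correct and follows essentially the same route as the paper: you define the open relation $\mathcal R$ as the $D_\theta$-preimage of the leafwise non-degenerate $2$-forms (the paper's $\mathcal R_\theta = D_\theta^{-1}(\mathcal S)$), lift $\omega_0$ to a section $\sigma_0$ via the epimorphism property of Lemma~\ref{lemma_lcs}, invoke Theorem~\ref{h-principle} to get a homotopy inside $\mathcal R$ ending in a section holonomic near a core, and push everything down by $\omega_t = D_\theta\circ\sigma_t$. The only differences are cosmetic (you inline the cut-off discussion that the paper delegates entirely to the proof of Theorem~\ref{h-principle}).
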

\begin{proof} Let $\mathcal S$ denote the set of all elements $\omega_x$ in $\wedge^2(T_x^*M)$, $x\in M$, such that the restriction of $\omega_x$ is non-degenerate on $D=T_x\mathcal F$. Since non-degeneracy is an open condition, $\mathcal S$ is an open subset of $\wedge^2(T^*M)$.
Given a closed 1-form $\theta \in \xi$, let
\[{\mathcal R}_\theta = D_\theta^{-1}(\mathcal S)\subset E^{(1)},\]
where $D_\theta$ is defined as in Lemma~\ref{lemma_lcs}. Then ${\mathcal R}_\theta$ is an open relation. Let $\sigma_0$ be a section of $E^{(1)}$ such that $D_\theta(\sigma_0)=\omega_0$. By Theorem~\ref{h-principle}, there exists a homotopy of sections $\sigma_t$ of $\sigma_0$ lying in ${\mathcal R}_\theta$ such that $\sigma_1=j^1\alpha$ (that is $\sigma_1$ is holonomic) on an open neighbourhood $U$ of $A$.

Then the 2-forms $\omega_t=D_\theta\circ \sigma_t$, $t\in [0,1]$, take values in $\mathcal S$. Hence,
\begin{enumerate}
\item $\omega_t$ is $\mathcal{F}_{0}$-leafwise non-degenerate for all $t\in [0,1]$.
\item $\omega_1=d_\theta\alpha$ is $d_{\theta}$-exact on $U$.
\end{enumerate}
\end{proof}

\textit{Proof of Theorem~\ref{lcs}.}
To prove the result we proceed as in \cite{fernandes}.
Consider the canonical Grassmann bundle $G_{2n}(TM)\stackrel{\pi}{\longrightarrow}M$ for which the fibres $\pi^{-1}(x)$ over a point $x\in M$ is the Grassmannian of $2n$-planes in $T_x M$. The space $Dist_q(M)$ of codimension $q$ distributions on $M$ can be identified with the section space $\Gamma(G_{2n}(M))$. We topologize $Dist_q(M)$ and $\Gamma(\wedge^2 T^*M)$ with the compact open topology. Let $\Delta_q$ denote the subset of  $Dist_q(M) \times \Gamma(\wedge^2 T^*M)$ defined by
\[\begin{array}{rcl}
\Delta_q &:=&\{(D,\omega):(\iota_D^*\omega)^{n} \neq 0\} \ \ \text{and}\\
\Phi_q&:=& Fol_{q}(M) \times \Gamma(\wedge^2T^*M) \cap \Delta_q
\end{array}\]
where $Fol_q(M) \subset Dist_q(M)$ is the space of all codimension $q$ foliations (that is, integrable distributions) on $M$.

Let $\omega_t$ be a homotopy of 2-forms as obtained in Proposition~\ref{approx_lcs}. Then $(\mathcal{F}_0,\omega_t) \in \Phi_q$ for $0\leq t\leq 1$.
Since $M$ is an open manifold there exists an isotopy $g_t$, $0\leq t\leq 1$, with $g_{0}=id_{M}$ such that $g_1$ takes $M$ into $U$. Now we define $(\mathcal{F}'_t, \omega'_t) \in \Phi_{q}$ for  $t\in [0,1]$ by setting
\begin{center}$\mathcal F'_t=g_t^{-1}\mathcal F_0, \ \ \ \omega'_t=g_t^* \omega_1.$\end{center}
Then, $\omega'_t$ is $\mathcal F'_t$-leafwise non-degenerate. Further, it is easy to see that $\omega'_1$ is $d_{g_1^*\theta}$ closed.
\[\begin{array}{rcl}
d_{g_1^*{\theta}}\omega'_1 & = & d_{g_1^*{\theta}}(g_1^* \omega_1)\\
& = & dg_1^* \omega_1 - g_1^*\theta \wedge g_1^* \omega_1\\
&=& g_1^*[d \omega_1- \theta \wedge \omega_1]\\
&=& g_1^*d_\theta \omega_1=0
\end{array}\]
since $\omega_1$ is $d_\theta$-closed on $U$ and $g_1$ maps $M$ into $U$. Since $g_1$ is homotopic to the identity map the de Rham cohomology class $[g_1^*\theta]=[\theta]=\xi$. Note that $\omega_1$ defines a ${\mathcal F}_1$-leafwise locally conformal symplectic structure.
Let $H^1(\mathcal F_1)$ denote the first foliated de Rham cohomology group of the foliated manifold $(M,\mathcal F)$. Denote by $\xi_{\mathcal F_1}$ the image of $\xi$ under the canonical map $H^1(M)\to H^1(\mathcal F_1)$. Then it follows that the foliated Lee class of the foliated 2-form $\omega'_1$ is $\xi_{\mathcal F_1}$. The desired homotopy is obtained by the concatenation of the two homotopies namely $(\mathcal{F}_0,\omega_t)$ and $(\mathcal{F}'_t,\omega'_t)$.\qed\\

\noindent{\bf Remark} If we take $\theta$ to be the zero form we get back the result of Fernandes and Frejlich in\cite{fernandes}.

\begin{corollary}Let $M$ be a smooth manifold and $\omega$ be a non-degenerate 2-form on $M$. Given any de Rham cohomology class $\xi\in H^1(M)$,
$\omega$ can be homotoped through non-degenerate 2-forms to a locally conformal symplectic form $d_\theta \alpha$, where the deRham cohomology
class of $\theta$ is $\xi$.\label{approx_symp}\end{corollary}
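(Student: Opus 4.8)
The plan is to obtain the corollary as the codimension-zero special case of Theorem~\ref{lcs}. Since $\omega$ is non-degenerate on all of $M$, the dimension of $M$ is even, say $2n$, and I would take $\mathcal{F}_0$ to be the (unique) codimension-zero foliation of $M$, whose single leaf is $M$ itself, so that $T\mathcal{F}_0 = TM$. With this choice the global non-degeneracy of $\omega$ is literally the statement that $\omega$ is $\mathcal{F}_0$-leafwise non-degenerate, so the hypotheses of Theorem~\ref{lcs} are met (with $q=0$) once $M$ is assumed open.

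Applying Theorem~\ref{lcs} to the data $(\mathcal{F}_0,\omega)$ and the prescribed class $\xi$ produces a homotopy $(\mathcal{F}_t,\omega_t)$ together with a closed $1$-form $\theta$ representing $\xi$ such that each $\omega_t$ is $\mathcal{F}_t$-leafwise non-degenerate and $d\omega_1-\theta\wedge\omega_1=0$. The key simplification for $q=0$ is that $Dist_0(M)$ consists of the single distribution $TM$, so every $\mathcal{F}_t$ has $T\mathcal{F}_t=TM$; the foliation homotopy is therefore forced to be constant and leafwise non-degeneracy of $\omega_t$ is exactly ordinary non-degeneracy on $M$. Hence $\omega_t$ is the required homotopy through genuine non-degenerate $2$-forms on $M$, and $\omega_1$ satisfies $d_\theta\omega_1=0$, i.e. it is locally conformal symplectic with Lee class $\xi$.

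It remains only to exhibit $\omega_1$ in the explicit form $d_\theta\alpha$. For this I would trace through the construction behind Theorem~\ref{lcs}: Proposition~\ref{approx_lcs} already delivers a $d_\theta$-exact form $d_\theta\alpha$ on a neighbourhood $U$ of a core, and in the proof of Theorem~\ref{lcs} the final $2$-form is $g_1^*\omega_1=g_1^*(d_\theta\alpha)=d_{g_1^*\theta}(g_1^*\alpha)$, where $g_1$ compresses $M$ into $U$. Thus the output is genuinely of the form $d_{\theta'}\alpha'$ with $\theta'=g_1^*\theta$ still representing $\xi$ (since $g_1\simeq\mathrm{id}_M$ implies $[g_1^*\theta]=[\theta]=\xi$), which is the asserted primitive after relabelling.

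The only point needing care is the hypothesis on $M$: Theorem~\ref{lcs} is an open-manifold statement, so the corollary must be read with $M$ open (as the surrounding discussion makes clear), and on a closed manifold the conclusion would generally fail for the same obstruction-theoretic reasons that block Gromov's theorem there. Given openness there is no genuine obstacle---the corollary is a direct specialization---and the only routine verifications are that codimension-zero foliations collapse all leafwise conditions to global ones and that the de Rham class of the Lee form is preserved under the compressing isotopy.
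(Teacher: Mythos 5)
Your proposal is correct and follows exactly the paper's route: the paper's own proof is the single line ``This is a direct consequence of Theorem~\ref{lcs},'' i.e.\ precisely your specialization to the codimension-zero foliation $T\mathcal{F}_0=TM$. Your additional care --- tracing through Proposition~\ref{approx_lcs} to justify the explicit $d_\theta\alpha$ form via $g_1^*(d_\theta\alpha)=d_{g_1^*\theta}(g_1^*\alpha)$, and noting that $M$ must be open for Theorem~\ref{lcs} to apply --- only fills in details the paper leaves implicit.
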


\begin{proof}This is a direct consequence of Theorem~\ref{lcs}.\end{proof}

\section{The foliated contact case}

In this section we prove Theorem~\ref{contact}. The proof is very similar to the l.c.s case.

\begin{definition} {\em Let $M$ be a smooth manifold. A pair $(\alpha,\beta)$  consisting of a 1-form $\alpha$ and a 2-form $\beta$ on $M$ is said to be an almost contact structure on $M$ if $\alpha\wedge \beta^n$ is nowhere vanishing on $M$; equivalently, the restriction of $\beta$ to the vector bundle $\ker \alpha$ is a symplectic structure on the bundle.}
\end{definition}

\begin{lemma} Let $M^n$ be a smooth manifold and $E=\wedge^{1}M$. Then there exists a vector bundle epimorphism $\bar{D}$
\[\begin{array}{rcl}
  E^{(1)} & \stackrel{\bar{D}}{\longrightarrow} & \wedge^{1}M\oplus \wedge^{2}M\\
\downarrow &  & \downarrow \\
M & \stackrel{id_{M}}{\longrightarrow} & M
\end{array}\]
such that $\bar{D}\circ (j^{1}\alpha)=(\alpha,d\alpha)$ for any 1-form $\alpha$ on $M$.\label{lemma_contact}\end{lemma}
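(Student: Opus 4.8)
The plan is to mimic the construction of $D_\theta$ in Lemma~\ref{lemma_lcs}, but now packaging the value of a $1$-form together with its exterior derivative into a single bundle map. First I would define $\bar D$ pointwise by the formula $\bar D(j^1\alpha(x_0)) = (\alpha(x_0), d\alpha(x_0))$ and verify that the right-hand side depends only on the $1$-jet of $\alpha$ at $x_0$. Working in local coordinates $(x^1,\dots,x^n)$ and writing $\alpha = \sum_i \alpha_i\, dx^i$, the $1$-jet $j^1\alpha(x_0)$ is encoded by the tuple $(a_i, a_{ij})$ with $a_i = \alpha_i(x_0)$ and $a_{ij} = \tfrac{\partial \alpha_i}{\partial x^j}(x_0)$. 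The first component of $\bar D$ reads off $\sum_i a_i\, dx^i$, which is manifestly determined by the $a_i$, while the second component is $d\alpha(x_0) = \sum_{i<j}(a_{ji}-a_{ij})\, dx^i\wedge dx^j$, determined by the $a_{ij}$. Hence $\bar D$ is well defined and does not depend on the chosen representative $\alpha$; by construction $\bar D\circ(j^1\alpha) = (\alpha, d\alpha)$.

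Next I would confirm that $\bar D$ is a vector bundle morphism, which is clear from the linearity of the defining formula in the jet coordinates, and then establish surjectivity onto $\wedge^1 M \oplus \wedge^2 M$. Given a target value $\bigl(\sum_i c_i\, dx^i,\ \sum_{i<j} b_{ij}\, dx^i\wedge dx^j\bigr)$, I must exhibit a $1$-jet mapping to it. The first component forces $a_i = c_i$, and the second requires $a_{ji}-a_{ij} = b_{ij}$ for $i<j$; this is solvable by taking $a_{ij} = -a_{ji} = -\tfrac{b_{ij}}{2}$ (any antisymmetric choice works, and the diagonal derivatives $a_{ii}$ are free). Thus the linear system always has a solution, so $\bar D$ is an epimorphism.

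The argument is almost identical to that of Lemma~\ref{lemma_lcs}, the only genuine difference being that the target bundle now carries the extra $\wedge^1 M$ summand recording $\alpha$ itself. Since that summand depends only on the $a_i$ part of the jet and the $a_i$ are entirely unconstrained by the $\wedge^2 M$ part, the two components decouple and surjectivity is immediate. I do not expect any serious obstacle: the main point to be careful about is simply checking that the two components of $\bar D$ impose independent conditions on the jet coordinates, so that solvability of the combined system follows from solvability of each piece separately. This is what makes $\bar D$ a full epimorphism rather than merely a map with dense image, and it is the property we will need so that an arbitrary almost contact pair $(\theta_0,\omega_0)$ can be lifted to a section of $E^{(1)}$ and fed into Theorem~\ref{h-principle}.
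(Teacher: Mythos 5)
Your proposal is correct and follows essentially the same route as the paper: define $\bar D$ pointwise by $j^1\alpha(x_0)\mapsto(\alpha(x_0),d\alpha(x_0))$, check well-definedness in jet coordinates $(a_i,a_{ij})$, and prove surjectivity by solving the decoupled linear system $a_i=c_i$, with an antisymmetric choice of the $a_{ij}$ for the 2-form component. The only differences are cosmetic (a sign convention in writing $d\alpha$ and the explicit remark that the two components impose independent conditions), neither of which affects the argument.
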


\begin{proof} Define $\bar{D}$ by $\bar{D}(j^1\alpha(x_0))=(\alpha(x_0),d\alpha(x_0))$ for any local 1-form $\alpha$ defined near a point $x_0$. It follows from the proof of Lemma~\ref{lemma_lcs} that this map is well defined. Hence $\bar{D}\circ j^1\alpha=d\alpha$ for any 1-form $\alpha$.
Let $(x^{1},...,x^{n})$ be a local coordinate system around $x_{0}\in M$ and $\alpha=\Sigma_{i=1}^{n}\alpha_{i}dx^{i}$ be the representation of $\alpha$ with respect to these coordinates. Then $j^{1}_{x_{0}}(\alpha)$ is uniquely determined by the ordered tuple $(a_{i},a_{ij})\in \mathbb{R}^{n+n^{2}}$ as in Lemma~\ref{lemma_lcs} and
\[\bar{D}(j^{1}_{x_{0}}\alpha)= (\alpha(x_{0}),d\alpha(x_{0}))= (\Sigma_{i=1}^n a_i dx^i, \Sigma_{i<j}(a_{ij}-a_{ji})dx^i \wedge dx^j)\]
Then $\bar{D}$ is  an epimorphism since the following system of equations
\begin{center}
$a_i=b_i$ \ \ and \ \ $a_{ij}-a_{ji}=b_{ij}$ \ \ for all $i\neq j$, $i,j=1,...,n$.
\end{center}
is clearly solvable in $a_i$ and $a_{ij}$.
Hence any section $(\theta,\omega):M\rightarrow \wedge^{1}M\oplus \wedge^{2}M$ can be lifted to a section $F_{(\theta,\omega)}:M\rightarrow E^{(1)}$ such that $\bar{D}\circ F_{(\theta,\omega)}=(\theta,\omega)$ and any two such lifts of a given $(\theta,\omega)$ are homotopic.\end{proof}

\begin{proposition}Let $M$ be an open manifold and $\mathcal F_0$ be a regular foliation on $M$. Let $(\theta,\omega)$ be a pair consisting of a 1-form $\theta$ and a 2-form $\omega$ on $M$ such that the restriction of $(\theta,\omega)$ to the leaves are almost contact structures. Then $(\theta,\omega)$ can be homotoped through such pairs to a pair $(\theta', \omega')$, where $\omega'=d\theta'$ on a neighbourhood $U$ of some core $K$ of $M$.
\end{proposition}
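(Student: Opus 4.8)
The strategy is to mirror the proof of Proposition~\ref{approx_lcs} verbatim, replacing the differential operator $D_\theta$ by the operator $\bar D$ from Lemma~\ref{lemma_contact}. The plan is to set up an open differential relation in the $1$-jet bundle $E^{(1)}=(\wedge^1 M)^{(1)}$ whose solutions are exactly the $1$-forms $\alpha$ for which $(\alpha,d\alpha)$ is a leafwise almost contact structure, and then invoke Theorem~\ref{h-principle} to produce a holonomic section near a core.

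First I would define $\mathcal S$ to be the open subset of $\wedge^1 T^*M \oplus \wedge^2 T^*M$ consisting of all pairs $(\theta_x,\omega_x)$ whose restriction to $D=T_x\mathcal F_0$ is an almost contact structure, i.e.\ satisfies $(\iota_D^*\theta_x)\wedge(\iota_D^*\omega_x)^n\neq 0$; this is open because the almost-contact (nondegeneracy) condition is open. Pulling back along the epimorphism $\bar D$ of Lemma~\ref{lemma_contact} gives an open relation $\mathcal R = \bar D^{-1}(\mathcal S)\subset E^{(1)}$. Next, using the lifting property at the end of Lemma~\ref{lemma_contact}, I would choose a section $\sigma_0$ of $E^{(1)}$ with $\bar D(\sigma_0)=(\theta,\omega)$; since $(\theta,\omega)$ is leafwise almost contact by hypothesis, $\sigma_0$ is a section of $\mathcal R$.

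Now Theorem~\ref{h-principle} applies directly: since $M$ is open and $\mathcal R$ is an open relation, there is a core $K$, a neighbourhood $U=Op\,K$, and a homotopy of sections $\sigma_t$ of $\mathcal R$ with $\sigma_0$ the chosen section and $\sigma_1=j^1\alpha$ holonomic on $U$. Setting $(\theta_t,\omega_t)=\bar D\circ\sigma_t$ gives a homotopy through pairs taking values in $\mathcal S$, so each $(\theta_t,\omega_t)$ is leafwise almost contact; writing $(\theta',\omega')=(\theta_1,\omega_1)=(\alpha,d\alpha)$ on $U$ yields $\omega'=d\theta'$ on $U$, as required. I would finish by noting that the homotopy $(\theta_t,\omega_t)$ keeps $\mathcal F_0$ fixed, which is exactly the conclusion.

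I do not expect a serious obstacle here, since every ingredient has been prepared: the open relation comes from Lemma~\ref{lemma_contact}, openness of $\mathcal S$ from the definition of almost contact structure, and the holonomic approximation from Theorem~\ref{h-principle}. The only point requiring a little care is verifying that $\bar D$ really does send the holonomic section $j^1\alpha$ to the honest pair $(\alpha,d\alpha)$ so that the exactness relation $\omega'=d\theta'$ holds on $U$ and not merely up to lower-order jet data; but this is precisely the content $\bar D\circ j^1\alpha=(\alpha,d\alpha)$ established in Lemma~\ref{lemma_contact}.
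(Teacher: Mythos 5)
Your proposal is correct and follows essentially the same route as the paper's own proof: define the open subset of $\wedge^1 T^*M\oplus\wedge^2 T^*M$ cut out by the leafwise almost contact condition, pull it back along the epimorphism $\bar D$ of Lemma~\ref{lemma_contact} to get an open relation in $E^{(1)}$, lift $(\theta,\omega)$ to a section of this relation, apply Theorem~\ref{h-principle} near a core, and push the resulting homotopy back down via $\bar D$. The only difference is notational (the paper calls your $\mathcal S$ by $\mathcal C$), and your explicit check that $\bar D\circ j^1\alpha=(\alpha,d\alpha)$ yields $\omega'=d\theta'$ on $U$ is exactly the point the paper relies on as well.
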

\begin{proof} Let $\mathcal C$ denote the set of all pairs $(\theta_x, \omega_x)\in T_x^*M\times \wedge^2(T_x^*M)$, $x\in M$, such that $\iota_D^*\theta_x\wedge\iota_D^*\omega_x\neq 0$, where $D$ is the tangent space to the leaf of $\mathcal F$ through $x$. Then $\mathcal C$ is an open  subset of $T^*M\oplus \wedge^2(T^*M)$.
Let
\[{\mathcal R} = \bar{D}^{-1}(\mathcal C)\subset E^{(1)},\]
where $E=T^*M$ and $\bar{D}$ is as in Lemma~\ref{lemma_contact}. Then ${\mathcal R}$ is an open relation. Let $\sigma$ be such that $\bar{D}(\sigma_0)=(\theta_0,\omega_0)$. By Theorem~\ref{h-principle}, there exists a homotopy of sections $\sigma_t$ lying in ${\mathcal R}$ such that $\sigma_1=j^1\theta_1$ for some $1$-form $\theta_1$ (that is $\sigma_1$ is holonomic) on an open neighbourhood $U$ of $K$.

Then the pairs $(\theta_t,\omega_t)=D\circ \sigma_t$, $t\in [0,1]$, take values in $\mathcal C$.
It may be seen easily that
\begin{enumerate}
\item $(\theta_t,\omega_t)$ is a $\mathcal{F}_{0}$-leafwise almost contact structure.
\item $d\theta_1=\omega_1$ on $U$.
\end{enumerate}
Hence $(\theta_t,\omega_t)$ is the desired homotopy.
\end{proof}

\textit{Proof of Theorem~\ref{contact}}. Let $D_0$ denote the distribution defined by the foliation $\mathcal F_0$. Define $\Delta_q$, $\Phi_q$ as subsets of $Dist_{q}(M)\times \Omega^1(M) \times \Omega^2(M)$ as follows:
\[\begin{array}{rcl}
\Delta_q &:=&\{(D,\theta,\omega):( \iota _{D}^{*}\theta) \wedge( \iota _{D}^{*}\omega)^{n} \neq 0 \text{ at each point of } M\}\\
\Phi_{q} &:=&Fol_{q}(M)\times \Gamma(\wedge^{1}M\oplus \wedge^{2}M)\cap \Delta_q,\\
 \end{array}\]
where $i_D$ is the inclusion of $D$ in $TM$. By the given hypothesis, $(\mathcal{F}_0,\theta_0,\omega_0)$ is in $\Phi_q$.
Let $(\theta_t,\omega_t)$ be a homotopy of $(\theta_0,\omega_0)$ as obtained in Lemma~\ref{lemma_contact}$. Then (\mathcal{F}_0,\theta_t,\omega_t)$ belongs to $\Phi_{q}$ for $0\leq t\leq 1$.
Choose an isotopy $g_{t}:M\rightarrow M$ such that $g_{0}=id_{M}$ and $g_{1}(M)\subset U$. Such an isotopy exists because $M$ is an open manifold and $U$ is a neighbourhood of a core of $M$. Now we define $(\mathcal{F}'_t,\theta'_t,\omega'_t)\in \Phi_{q}$, $t\in[0,1]$ by setting
\begin{center}$\mathcal{F}'_t= g_t^{-1}(\mathcal{F}_0),\ \ \
\theta'_t=g_t^*\theta_1,\ \ \ \omega'_t=g_t^*\omega_1.$\end{center}
Finally observe that
\[d\theta'_1=d[g_1^*\theta_1]= g_1^*[d\theta_1]= g_1^*[\omega_1]=\omega'_1,\]
since $g_1(M)\subset U$ and $d\theta_1=\omega_1$ on $U$. Therefore, $\theta'_1$ is a ${\mathcal F}'_1$-leafwise contact form. Concatenating the homotopies $(\mathcal F_0,\theta_t,\omega_t)$ and $(\mathcal F'_t,\theta'_t,\omega'_t)$ we obtain the desired homotopy.
\qed\\

\begin{remark} {\em An analogue of Theorem~\ref{bertelson} holds true in the contact case. Suppose that $M$ is a smooth manifold with a foliation $\mathcal F$ of dimension $2n+1$. Let $(\alpha,\beta)$ be a pair consisting of a foliated 1-form $\alpha$ and a foliated 2-form $\beta$ on $(M,\mathcal F)$ such that $\alpha\wedge\beta^n$ is nowhere vanishing on the leaves of $\mathcal F$. The leafwise non-vanishing condition on $(\alpha,\beta)$ is an open condition and hence defines an open subset $\mathcal R$ in the 1-jet space $E^{(1)}$, where $E\to M$ is the vector bundle whose total space is $T^*{\mathcal F}\oplus\Lambda^2(T^*\mathcal F)$. Since the condition is invariant under the action of foliated diffeotopies Bertelson's theorem applies to this relation provided we assume that the foliation $(M,\mathcal F)$ satisfies certain openness condition as defined in \cite{bertelson}. Hence, the given pair $(\alpha,\beta)$ can be homotoped through such pairs to $(\eta,d_{\mathcal F}\eta)$ for some foliated 1-form $\eta$ on $(M,\mathcal F)$, so that $\eta$ is a foliated contact form. (Here $d_\mathcal F$ denotes the coboundary map of the foliated de Rham complex).}\end{remark}

\section{Regular Jacobi structures on open manifolds}

We can reformulate Theorems ~\ref{lcs} and ~\ref{contact} in terms of the Jacobi structure as follows. Let $\nu^k(M)$ denote the space of sections of the alternating bundle $\Lambda^k(TM)$. We shall refer to these sections as $k$-multivector fields on $M$. A pair $(\Lambda,E)\in\nu^2(M)\times\nu^1(M)$ will be called a regular pair if $\mathcal D=\Lambda^\#(T^*M)+\langle E\rangle$ is a regular distribution on $M$.

Let $(\Lambda,E)$ be a regular pair such that the distribution $\mathcal D=\text{Im\,}\Lambda^\#+\langle E\rangle$ is even dimensional. Define $\phi:\mathcal D^*\to \mathcal D$ by $\phi(i^*\alpha)=\Lambda^\#\alpha$ for all $\alpha\in T^*M$.
Using the skew symmetry property of $\Lambda$ we can show that $\ker\Lambda^\#=\ker \alpha$. Hence $\phi$ is well defined. Clearly $\phi$ is an isomorphism and therefore, it defines a section $\omega$ of $\wedge^2\mathcal D^*$ by $\omega(X,Y)=\phi^{-1}(X)(Y)$ which is non-degenerate at each point. Conversely given any section $\omega$ of $\wedge^2\mathcal D^*$, define $\Lambda$ by the following commutative diagram:

\begin{equation}
\begin{array}{rcl}
T^*M & \stackrel{\Lambda^\#}{\longrightarrow} & TM\\
i^* {\downarrow} &  & {\uparrow} i\\
\mathcal D^*& \stackrel{\tilde{\omega}^{-1}}{\longrightarrow} & \mathcal D
\end{array}\label{bivector lambda}
\end{equation}
where $\tilde{\omega}$ is defined by $\tilde{\omega}(X)=i_X\omega$ for all $X\in\Gamma\mathcal D$.

In view of the above correspondence, we can interpret Theorem~\ref{lcs} as follows.
\begin{theorem} Let $M$ be an open manifold and $(\Lambda_0,E_0)$ be a regular pair such that the distribution $\mathcal D=\text{Im\,}\Lambda^\#+\langle E\rangle$ is even dimensional and integrable. Fix a de Rham cohomology class $\xi$ in $H^2(M)$. Then $(\Lambda_0,E_0)$ can be homotoped through regular pairs to a Jacobi pair $(\Lambda_1,E_1)$ such that $\phi_1^{-1} (E_1)$ is a closed 1-form in the foliated de Rham cohomology of $(M,\mathcal F_1)$ and the cohomology class of $\phi_1^{-1} (E_1)$ in $H^1(M,\mathcal F)$ is the same as $\xi_{\mathcal F_1}$, where $\mathcal F_1$ is the characteristic foliation of the Jacobi pair $(\Lambda_1,E_1)$ and $\phi_1:T^*\mathcal F_1\to T\mathcal F_1$ is defined as above.
\label{even_jacobi}\end{theorem}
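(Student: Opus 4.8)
The plan is to pass through the leafwise correspondence set up just before the statement, reduce everything to Theorem~\ref{lcs}, and translate the conclusion back into the language of regular pairs. Recall the dictionary: for a regular pair whose characteristic distribution $\mathcal D$ is even dimensional and integrable, the isomorphism $\phi:\mathcal D^*\to\mathcal D$ converts the bivector $\Lambda$ into a leafwise non-degenerate $2$-form $\omega$ via $\omega(X,Y)=\phi^{-1}(X)(Y)$, and conversely diagram~(\ref{bivector lambda}) recovers $\Lambda$ from any leafwise non-degenerate $\omega$. Under this dictionary the vector field $E$ corresponds to the leafwise $1$-form $\phi^{-1}(E)=i_E\omega$, i.e.\ $E=\phi(i_E\omega)$; this is exactly the Lee form of the leafwise l.c.s.\ data in the sense of Section~2. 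Thus a regular pair with even dimensional integrable $\mathcal D$ is the same data as a triple $(\mathcal F,\omega,\lambda)$ consisting of a regular foliation, a leafwise non-degenerate $2$-form, and a leafwise $1$-form $\lambda=\phi^{-1}(E)$.

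First I would extract the initial data. Let $\mathcal F_0$ be the foliation integrating $\mathcal D$, so $T\mathcal F_0=\mathcal D$. Since $\mathrm{Im}\,\Lambda_0^\#$ is even dimensional (it is the image of a skew map, whose rank is even) while $\mathcal D=\mathrm{Im}\,\Lambda_0^\#+\langle E_0\rangle$ is assumed even dimensional, the vector $E_0$ must already lie in $\mathrm{Im}\,\Lambda_0^\#$; hence $\mathcal D=\mathrm{Im}\,\Lambda_0^\#=T\mathcal F_0$ and in particular $E_0\in T\mathcal F_0$. Let $\omega_0$ be the leafwise non-degenerate $2$-form associated to $\Lambda_0$ by $\phi_0$, and apply Theorem~\ref{lcs} to $(\mathcal F_0,\omega_0)$ with the fixed class $\xi$. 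This yields a homotopy $(\mathcal F_t,\omega_t)$ of leafwise non-degenerate forms and a closed $1$-form $\theta$ representing $\xi$ with $d\omega_1-\theta\wedge\omega_1=0$, so that $(\omega_1,\iota_1^*\theta)$ is a genuine leafwise l.c.s.\ structure on $(M,\mathcal F_1)$ with foliated Lee class $\xi_{\mathcal F_1}$, where $\iota_t:T\mathcal F_t\hookrightarrow TM$ denotes the inclusion.

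Next I would translate back. For each $t$ let $\phi_t:T^*\mathcal F_t\to T\mathcal F_t$ be the isomorphism determined by $\omega_t$, define $\Lambda_t$ from $\omega_t$ by diagram~(\ref{bivector lambda}), and set $E_t=\phi_t(\iota_t^*\theta)\in T\mathcal F_t$. Since $T\mathcal F_t$ has constant rank and contains $E_t$, the characteristic distribution $\mathrm{Im}\,\Lambda_t^\#+\langle E_t\rangle$ equals $T\mathcal F_t$, so every pair in the family is regular. At $t=0$ the bivector $\Lambda_0$ is recovered exactly, but the reconstructed $E_0'=\phi_0(\iota_0^*\theta)$ need not equal the prescribed $E_0$; both lie in $T\mathcal F_0$, however, so the linear homotopy $s\mapsto(\Lambda_0,(1-s)E_0+sE_0')$ keeps the characteristic distribution equal to $T\mathcal F_0$ and stays within regular pairs, and prepending it repairs the initial condition. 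At $t=1$, $\phi_1^{-1}(E_1)=\iota_1^*\theta$ is a leafwise $1$-form which is $\bar d$-closed, since $\bar d(\iota_1^*\theta)=\iota_1^*(d\theta)=0$, and whose foliated class is the image of $\xi$, namely $\xi_{\mathcal F_1}$. Finally, invoking the correspondence of Section~2 that a leafwise l.c.s.\ structure determines a Jacobi structure with the prescribed characteristic foliation, the pair $(\Lambda_1,E_1)$ associated to $(\omega_1,\iota_1^*\theta)$ satisfies the Jacobi identities~(\ref{jacobi_def}). Concatenating the preliminary homotopy with $(\Lambda_t,E_t)$ produces the required path of regular pairs.

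The main obstacle is the bookkeeping at the two ends rather than any hard analysis. The genuinely delicate point is that the $E$-component reconstructed from the leafwise Lee form will not match the prescribed $E_0$, and one must check that correcting it—and indeed that every intermediate pair—does not disturb regularity. This is precisely what the observation $E_0\in T\mathcal F_0$ (forced by the even dimensionality of $\mathcal D$) is for, and it is the only place where the even dimensionality hypothesis is essential. The verification that $(\Lambda_1,E_1)$ solves~(\ref{jacobi_def}) is then not a separate computation but a direct appeal to the leafwise-l.c.s.-to-Jacobi correspondence already recorded in Section~2.
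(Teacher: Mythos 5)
Your proposal is correct and follows essentially the route the paper intends: the paper gives no separate proof of Theorem~\ref{even_jacobi}, presenting it as an ``interpretation'' of Theorem~\ref{lcs} through the correspondence $(\Lambda,E)\leftrightarrow(\mathcal F,\omega,\phi^{-1}(E))$ set up around diagram~(\ref{bivector lambda}), and its explicit proof of the odd-dimensional companion (Theorem~\ref{odd-jacobi}) is exactly the template you have reproduced in the even case. Two remarks. First, one routine step you skipped, which the paper does perform explicitly in the odd case: the form $\omega_0$ produced by $\phi_0$ is only a section of $\wedge^2\mathcal D^*$, whereas Theorem~\ref{lcs} takes a genuine 2-form on $M$ which is non-degenerate on leaves; you must first extend $\omega_0$ to a global 2-form (for instance by pulling back along the projection $TM\to\mathcal D$ determined by a splitting), just as the paper extends $(\alpha_0,\beta_0)$ to $(\tilde{\alpha}_0,\tilde{\beta}_0)$ before invoking Theorem~\ref{contact}, and then restrict back to $T\mathcal F_t$ when rebuilding $(\Lambda_t,E_t)$. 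Second, your observation that even-dimensionality forces $E_0\in\text{Im}\,\Lambda_0^{\#}$ (hence $\mathcal D=\text{Im}\,\Lambda_0^{\#}$), and the consequent repair homotopy $s\mapsto(\Lambda_0,(1-s)E_0+sE_0')$ at $t=0$, address details the paper suppresses entirely; the first is in fact needed for $\phi$ to be well defined and an isomorphism, and the second is needed for the homotopy to genuinely start at the given pair, so these are improvements rather than deviations. (You also correctly read $\xi$ as a class in $H^1(M)$; the $H^2(M)$ appearing in the statement is evidently a typo, since the Lee class is a degree-one class.)
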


\begin{theorem} Let $(\Lambda_{0},E_{0})\in \nu^2(M)\times\nu^1(M)$ be a regular pair an open manifold $M$. Suppose that the distribution $\mathcal{D}_0:= \Lambda_0^\#(T^*M)+\langle E_0\rangle$ is odd dimensional. Then $(\Lambda_0,E_0)$ can be homotoped through regular pairs to a Jacobi pair $(\Lambda_1,E_1)$ provided $\mathcal D_0$ is an integrable distribution.\label{odd-jacobi}\end{theorem}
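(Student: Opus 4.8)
The plan is to deduce Theorem~\ref{odd-jacobi} from Theorem~\ref{contact} by means of a pointwise algebraic correspondence between regular pairs with odd-dimensional characteristic distribution and leafwise almost contact structures, parallel to the even-dimensional dictionary of \eqref{bivector lambda} used for Theorem~\ref{even_jacobi}. The first observation is that regularity rigidifies the ranks: at each point $\operatorname{Im}\Lambda_0^\#$ is even-dimensional (the rank of a skew form is always even), whereas $\mathcal{D}_0 = \operatorname{Im}\Lambda_0^\# + \langle E_0\rangle$ has constant odd dimension $2n+1$; together these force $\operatorname{rank}\Lambda_0^\# \equiv 2n$ and $E_0 \notin \operatorname{Im}\Lambda_0^\#$ at every point. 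Hence $\operatorname{Im}\Lambda_0^\#$ is a rank-$2n$ subbundle, $E_0$ is a nowhere-vanishing section transverse to it, and $\mathcal{D}_0 = \operatorname{Im}\Lambda_0^\# \oplus \langle E_0\rangle = T\mathcal{F}_0$, where $\mathcal{F}_0$ integrates the (assumed integrable) distribution $\mathcal{D}_0$.

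I would then manufacture leafwise almost contact data. On the rank-$2n$ subbundle $\operatorname{Im}\Lambda_0^\#$ the restriction of $\Lambda_0$ is non-degenerate, so inverting it as in \eqref{bivector lambda} produces a leafwise symplectic $2$-form; let $\omega_0 \in \wedge^2 \mathcal{D}_0^*$ be this form, extended by placing $E_0$ in its kernel, and let $\theta_0 \in \mathcal{D}_0^*$ be determined by $\theta_0(E_0)=1$ and $\theta_0|_{\operatorname{Im}\Lambda_0^\#}=0$. Since $\ker\theta_0 = \operatorname{Im}\Lambda_0^\#$ and $\omega_0^n\neq 0$ there, we get $\theta_0\wedge\omega_0^n\neq 0$, so $(\theta_0,\omega_0)$ is a leafwise almost contact structure on $\mathcal{F}_0$. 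Fixing any complement of $\mathcal{D}_0$ in $TM$ and extending by zero yields global forms $(\theta_0,\omega_0)\in\Omega^1(M)\times\Omega^2(M)$ restricting to almost contact structures on the leaves, which is precisely the input required by Theorem~\ref{contact}.

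Applying Theorem~\ref{contact} gives a homotopy $(\mathcal{F}_t,\theta_t,\omega_t)$ of foliated almost contact structures with $\omega_1 = d\theta_1$, so that $\theta_1$ is a leafwise contact form on $\mathcal{F}_1$. I would now reverse the dictionary at each $t$: define $E_t\in T\mathcal{F}_t$ as the Reeb field of $(\theta_t,\omega_t)$ (the unique solution of $\theta_t(E_t)=1$, $i_{E_t}\omega_t=0$, which exists because $\theta_t\wedge\omega_t^n\neq 0$) and define $\Lambda_t$ by inverting $\omega_t$ on $\ker\theta_t$ via \eqref{bivector lambda}. This back-construction is pointwise algebraic, hence depends smoothly on $t$; every $(\Lambda_t,E_t)$ is a regular pair with $\operatorname{Im}\Lambda_t^\#=\ker\theta_t$ and characteristic distribution $T\mathcal{F}_t$, and at $t=0$ it returns $(\Lambda_0,E_0)$ because the forward and backward constructions are mutually inverse. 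At $t=1$ the relation $\omega_1=d\theta_1$ makes these formulas agree with those of the contact example ($E_1=b^{-1}(\theta_1)$ and $\Lambda_1(\alpha,\beta)=d\theta_1(b^{-1}\alpha,b^{-1}\beta)$ computed leafwise), so $(\Lambda_1,E_1)$ is the Jacobi pair canonically associated with the leafwise contact foliation $\mathcal{F}_1$, and the path $t\mapsto(\Lambda_t,E_t)$ is the desired homotopy through regular pairs.

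The step I expect to be the main obstacle is confirming that the endpoint $(\Lambda_1,E_1)$ really satisfies the Jacobi identities \eqref{jacobi_def} as global multivector fields on $M$, and not merely leaf by leaf. Because $\Lambda_1$ and $E_1$ are tangent to $\mathcal{F}_1$, the Schouten brackets $[\Lambda_1,\Lambda_1]$ and $[E_1,\Lambda_1]$ remain tangent to $\mathcal{F}_1$ and restrict on each leaf to the corresponding brackets of the contact Jacobi structure, where the identities $[\Lambda,\Lambda]=2E\wedge\Lambda$ and $[E,\Lambda]=0$ are classical; carrying out this leafwise-to-global reduction carefully, and simultaneously checking that each intermediate $(\Lambda_t,E_t)$ stays a regular pair while the foliation $\mathcal{F}_t$ moves, is the only delicate point, everything else being the algebra of the correspondence already exploited in Theorem~\ref{even_jacobi}.
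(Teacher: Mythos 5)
Your proposal is correct and follows essentially the same route as the paper: convert the regular pair $(\Lambda_0,E_0)$ into a leafwise almost contact pair via the pointwise dictionary (the paper's equations (\ref{alpha})--(\ref{beta})), extend to global forms, apply Theorem~\ref{contact}, and then reverse the dictionary along the homotopy using (\ref{vector E}) and (\ref{jacobi_lambda}). The only presentational difference is at the endpoint: where you sketch a leafwise-to-global Schouten bracket argument for the Jacobi identities, the paper simply invokes Kirillov's correspondence between regular contact foliations and regular Jacobi pairs with odd-dimensional characteristic distribution.
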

\begin{proof} Let $(\Lambda,E)$ be a regular pair and the distribution $\mathcal D=\Lambda^\#(T^*M)+\langle E\rangle$ is odd dimensional. Define a section $\alpha$ of $\mathcal D^*$ by the relations
\begin{equation}\alpha(\text{Im}\,\Lambda^\#)=0 \ \ \text{ and }\ \ \ \alpha(E)=1.\label{alpha}\end{equation}
Define $\beta$ as a section of $\wedge^2(\mathcal D^*)$ by
\begin{equation}\beta(\Lambda^\#\eta,\Lambda^\#\xi)=\Lambda(\eta,\xi),\ \ \ \ i_E\beta=0\label{beta},\end{equation}
where $i_E$ denotes the interior multiplication by $E$. It can be shown easily that $\beta$ is non-degenerate on Im\,$\Lambda^\#$. Hence  $\alpha\wedge \beta^n$ is nowhere vanishing. Further, note that $\Lambda^\#$ satisfies the relation $i_{\Lambda^{\#}\xi}\beta=-\xi|_{\ker\alpha}$.

On the other hand given a pair $(\alpha,\beta)$ as above we can write $\mathcal D=\ker\alpha\oplus\ker \beta$ and define a vector field $E$ on $M$ satisfying
\begin{equation}i_E\beta=0, \ \ \text{and}\ \ \alpha(E)=1\label{vector E}\end{equation}
Since $\beta$ is non-degenerate on $\ker\alpha$ by our hypothesis, $\tilde{\beta}:\ker\alpha \to (\ker\alpha)^*$ is an isomorphism. For any $\xi\in T^*M$ define $\Lambda^\#(\xi)$ as the unique element of $\ker\alpha$ such that
\begin{equation}\tilde{\beta}(\Lambda^{\#}\xi)=-\xi|_{\ker\alpha}.\label{jacobi_lambda}\end{equation}
Then $(\Lambda,E)$ is a regular pair. Thus there is a one to one correspondence between regular pairs $(\Lambda,E)$ and the triples $(\mathcal D,\alpha,\beta)$ such that $\alpha\wedge\beta^n$ is nowhere vanishing. Further, the regular contact foliations correspond to regular Jacobi pairs with odd-dimensional characteristic distributions under this correspondence \cite{kirillov}.

The result now follows directly from Theorem~\ref{contact}.
Let $(\Lambda_0,E_0)$ be as in the hypothesis and $\mathcal F_0$ be the foliation associated with $\mathcal D_0$. Then we can define $(\alpha_0,\beta_0)$ by the equations (\ref{alpha}) and (\ref{beta}) so that $\alpha_0\wedge \beta_0^n$ is non-vanishing on $\mathcal D_0$. The foliated forms $\alpha_0$ and $\beta_0$ can be extended to differential forms on $M$. Let $\tilde{\alpha}_0$ and $\tilde{\beta}_0$ be any two such extensions of $\alpha_0$ and $\beta_0$ respectively. By Theorem~\ref{contact}, we obtain a homotopy $(\mathcal F_t,\tilde{\alpha}_t,\tilde{\beta}_t)$ of $(\mathcal F_0, \tilde{\alpha}_0,\tilde{\beta}_0)$ such that $\tilde{\alpha}_t\wedge\tilde{\beta}_t^n$ is a nowhere vanishing form on $T\mathcal F_t$ and $\tilde{\beta_1}=d\tilde{\alpha}_1$ on $\mathcal F_1$, so that $\tilde{\alpha}_1$ is a $\mathcal F_1$-leafwise contact form. Let $(\alpha_t,\beta_t)$ be a pair of foliated forms obtained by restricting $(\tilde{\alpha}_t,\tilde{\beta}_t)$ to the tangent distribution of the foliation $\mathcal F_t$. The desired homotopy $(\Lambda_t,E_t)$ is then obtained from  $(\alpha_t,\beta_t)$ by (\ref{vector E}) and (\ref{jacobi_lambda}).
\end{proof}
We conclude with the following remark.
\begin{remark} {\em The integrability condition on the initial distribution in Theorems~\ref{lcs} and \ref{contact} can be relaxed to the extent that it is enough to have the distribution homotopic to the distribution of a foliation. This can be seen by taking into account the classification of foliations due to Haefliger\cite{haefliger}. We refer to  \cite{fernandes} for a detailed argument.}\end{remark}

\end{document}